\newtheorem{remark}{Remark}
\let\Oldsection\section
\renewcommand{\section}{\FloatBarrier\Oldsection}
\let\Oldsubsection\subsection
\renewcommand{\subsection}{\FloatBarrier\Oldsubsection}
\let\Oldsubsubsection\subsubsection
\renewcommand{\subsubsection}{\FloatBarrier\Oldsubsubsection}
\definecolor{myRed}{rgb}{1.0, 0.0, 0.0}
\definecolor{myBlue}{rgb}{0.2, 0.2, 1.0}
\definecolor{myPurp}{rgb}{0.5, 0.0, 1.0}
\newcommand{\defeq}{\vcentcolon=}
\newcommand*\Let[2]{\State #1 $\gets$ #2}
\newcommand*\Define[2]{\State \textbf{Define } #1 $\defeq$ #2}
\renewcommand{\chi}{\mathcal{X}}
\newcommand{\He}{\mathcal{H}}
\newcommand{\gG}{{\bm G}}
\newcommand{\gM}{{\bm M}}
\newcommand{\gD}{{\bm D}}
\newcommand{\fV}{\mathcal{V}}
\newcommand{\bigO}{\mathcal{O}}
\newcommand{\incr}[1]{ { \mathit{h}_#1  } }
\newcommand{\vu}{{\bm u}}
\newcommand{\vv}{{\bm v}}
\newcommand{\vx}{{\bm x}}
\newcommand{\vy}{{\bm y}}
\newcommand{\vp}{{\bm p}}
\newcommand{\vP}{{\bm P}}
\newcommand{\vQ}{{\bm Q}}
\newcommand{\vg}{{\bm \gamma}}
\newcommand{\vn}{{\bm n}}
\newcommand{\vX}{{\bm X}}
\newcommand{\vhX}{{\bm \chi}}
\newcommand{\vPhi}{{\bm \Phi}}
\newcommand{\vphi}{\mathit{\bm \Phi }}
\newcommand{\vT}{{\bm T}}
\newcommand{\vI}{{\bm I}}
\newcommand{\fundI}{I}
\newcommand{\R}{{\mathbb{R} }}
\renewcommand{\div}{\nabla \cdot}
\newcommand{\Div}{\,\text{div}}
\newcommand{\grad}{\nabla}
\newcommand{\Laplace}{\Delta}
\newcommand{\tr}{\text{tr}}
\newcommand{\adj}{\text{adj}}
\renewcommand{\d}{\; d}
\title{A Diffusion-Driven Characteristic Mapping Method for Particle Management}
\author{Xi-Yuan Yin\thanks{The Department of Mathematics and Statistics, McGill University, Montreal, Canada  H3A 0B9.}
\and Linan Chen\footnotemark[1]
\and Jean-Christophe Nave\footnotemark[1]  \thanks{Corresponding author. \newline \hspace*{13pt} E-mail addresses: \email{xi.yin@mail.mcgill.ca} (XYY). \email{lnchen@math.mcgill.ca} (LNC). \email{jcnave@math.mcgill.ca} (JCN).}}
\begin{document}

\maketitle

\begin{abstract}
We present a novel particle management method using the Characteristic Mapping framework. In the context of explicit evolution of parametrized curves and surfaces, the surface distribution of marker points created from sampling the parametric space is controlled by the area element of the parametrization function. As the surface evolves, the area element becomes uneven and the sampling, suboptimal. In this method we maintain the quality of the sampling by pre-composition of the parametrization with a deformation map of the parametric space. This deformation is generated by the velocity field associated to the diffusion process on the space of probability distributions and induces a uniform redistribution of the marker points. We also exploit the semigroup property of the heat equation to generate a submap decomposition of the deformation map which provides an efficient way of maintaining evenly distributed marker points on curves and surfaces undergoing extensive deformations.
\end{abstract}

\begin{keyword}
Particle management, Equiareal parametrization, Characteristic Mapping method, Heat equation
\end{keyword}

\section{Introduction}

The parametrization of a curve or surface has many applications in computer graphics, computational geometry and geometric modelling. In scientific computing, where physics modelling and simulation require solving surface PDEs, a parametric representation is often useful for simplifying the surface equations or for generating computational meshes. This parametrization is often time-dependent such as in the case of fluid interfaces in multiphase flows, where the extensive deformation of the surfaces can deteriorate their numerical accuracy. In this context, the numerical resolution of the surfaces are of special importance since the simulation of interfacial dynamics often relies on solving stiff, high order PDEs such as the Cahn-Hilliard equations \cite{gera2017cahn, gera2018modeling}, where proper spatial resolution is crucial for capturing topological transitions and for improving computational efficiency. In order to maintain a good representation of a parametric surface, current research largely focus on two main desirable properties for the parametrization: angle-preservation and area-preservation. 

An angle-preserving, or conformal parametrization guarantees that the pulled back metric on the parametric space differs from the flat metric only by a scalar multiplicative factor. There has been extensive research in the field of conformal maps and application to surface parametrization. Conformal parametrization of genus zero surfaces and its application to patching more complicated surfaces has been studied in \cite{gu2004genus,gu2003global}. For a given surface, conformal parametrizations are not unique and are generally not area-preserving. The area stretching will depend on the chosen conformal parametrization and on the intrinsic curvature of the surface; it may even grow exponentially in protruding regions of the surface. This motivated \cite{jin2004optimal} to design a method which generates a global conformal parametrization minimizing some chosen energy functional of the area element.

For curved surfaces, it is generally not possible to find a parametrization which is both conformal and area-preserving. For certain applications such as surface sampling, an equiareal parametrization is preferred as the density of the sample points will be uniform over the surface. For many Lagrangian particle methods used in interface tracking, appropriate particle redistribution and reinitialization methods are necessary to maintain the accuracy of the surface representation and to prevent artificial topological changes \cite{hieber2005lagrangian, bergdorf2006lagrangian}. Methods for computing equiareal parametrizations include \cite{balmelli2002space, yoshizawa2004fast}, which propose a relaxation algorithm based on some stretch factor computed from mesh point distances. Winslow's rezoning algorithm \cite{winslow1981adaptive} solves a system of variable diffusion equations on the coordinate functions in order to prescribe the Jacobian determinant. Many of these approaches can also be expressed in the framework of adaptive moving mesh methods where a moving-mesh PDE is used to evolved a numerical mesh in order to better resolve the different scales that a solution to an evolution equation may exhibit \cite{huang2010adaptive}. For instance, in \cite{bergdorf2005multilevel}, a moving-mesh PDE was employed as a redistribution map to reinitialize sample point positions in order to resolve the multiscale solutions generated from convection dominated flows.


We propose in this paper a novel framework for generating equiareal parametrizations of time-dependent curves or surfaces using the Characteristic Mapping method (CM): a numerical framework for the advection equation based on the gradient-augmented level-set method \cite{nave2010gradient} and the reference map technique \cite{kamrin2012reference}. The CM method was used in \cite{CM, CME} to solve the linear transport and the 2D incompressible Euler equations; it consists in computing the backward-in-time deformation map generated by an incompressible velocity field. Quantities transported under this velocity can be directly obtained as the pullback of the initial condition by this map. In this paper, we use the CM method on a compressible velocity field in order to transport a density field evolving under its continuity equation. The evolving density distribution can be obtained as pullback by the map, i.e. as a volume form. For equiareal parametrization, we define a probability distribution on the parametric space corresponding to a constant scaling of the area element associated with the parametrization. This density is interpreted as the initial condition of a diffusion equation which we write in conservation form. The velocity field associated with this conservation law generates a characteristic map which transports the initial density towards a uniform density. This redistribution map is pre-composed with the parametrization function to generate an equiareal parametrization. Note that since the image space of the parametrization is unchanged, this redistribution does not affect the accuracy of the surface location but improves its numerical representation. 

The idea of diffusion flows has been used extensively outside the context of surface parametrization. For fluid simulations, the diffusion velocity method, as a generalization to particle methods, has seen many applications in numerical simulations of transport-dispersion and of viscous flows, for instance in \cite{lacombe1999presentation, mycek2016formulation}. Further extensive theoretical and numerical analysis for the blob-particle method for linear and non-linear diffusion can also be found in \cite{carrillo2019blob}. It is well-known that the diffusion equation can be viewed as the $L^2$ gradient descent of the Dirichlet energy functional, this allows for fast and robust methods that approximate an uniform redistribution of the density. In the context of optimal transport, the diffusion equation can also be seen as a gradient descent of the Gibbs–Boltzmann entropy under the Wasserstein-2 metric as analysed in \cite{jordan1998variational}. There has been extensive research in the field of optimal transport specifically concerning gradient descent and geodesic flows \cite{benamou2016augmented, peyre2015entropic, carlier2017convergence, carrillo2019primal, benamou2016augmented}. Notably, the fluid mechanics interpretation of the Monge-Kantorovich problem in \cite{benamou2000computational, benamou2002monge} is most closely related to the CM framework presented in this paper. An overview of the main concepts in optimal transport and gradient flows in the space of probability densities can be found in \cite{santambrogio2017euclidean} and more complete surveys in \cite{ambrosio2008gradient, santambrogio2015optimal, villani2008optimal}. In the context of generating equiareal parametrization, the optimal transport methods have also been investigated, for instance in \cite{zhao2013area, su2016area, su2014optimal}.

There are three main features of the method presented in this paper. Firstly, we propose a novel framework for the application of density transport to the problem of maintaining an equiareal parametrization of a moving surface: the area element of the parametrization is interpreted as a probability density whose change in time is penalized by an $L^2$ gradient descent on its Dirichlet energy. Secondly, the reparametrization method proposed here acts purely on the parametric space, the new parametrization function is obtained as a pre-composition of a transport map with the original parametrization. Consequently, the reparametrization does not affect the precision of the surface and its movements but improves its numerical resolution. Thirdly, the method provides arbitrary resolution of the parametrization. This is possible on one hand due to the functional definition of the transport maps in the parametric and the ambient spaces provided by the semi-Lagrangian approach of the CM method. Indeed, once the redistribution map is computed, resampling of the surface can be readily done by map evaluations; no additional computations are needed. On the other hand, the CM method uses the semigroup structures of the deformation maps to leverage the separation of scales: long time deformations can be decomposed into short time submaps which can be computed on coarser grids \cite{CME}. Since the parametrized surfaces can undergo arbitrary deformations of various scales, the method is effective in achieving faster computational times while maintaining arbitrary resolution of the parametrization.

The paper is organized as follows: in section 2, we present the mathematical framework of the characteristic mapping method for the transport of densities as well as the diffusion flow used for the area redistribution. Section 3 provides an overview of the numerical implementation of the method with some error estimates. We present in section 4 the application of the redistribution method to the maintenance of an equiareal parametrization for moving surfaces. This section also contains the CM framework for the evolution of surfaces in a 3D ambient flow. This is the explicit parametric counterpart of the level-set method used in \cite{CM}. Section 5 presents some numerical results for the application of this method to the evolution of parametric curves and surfaces. Finally, section 6 contains some concluding remarks and potential directions for future research.

\section{Mathematical Formulation}
\subsection{Characteristic Mapping Method for Density Transport} \label{subsec:CM}
The use of the characteristic mapping method for linear advection and self-advection in the case of the Euler equations have been studied in \cite{CM, CME}. In this section, we extend the CM framework to the transport of density distributions.

For a given velocity field $\vu$ defined on some domain $U \subset \R^d$, characteristic curves of the velocity are given by the solution of the IVP
\begin{subequations} \label{eqGroup:charCurve}
\begin{align} 
\frac{d}{dt} \vg(t) = \vu(\vg(t), t)  & \quad \quad t \in \R_+ , \\
\vg(0) = \vg_0 & \quad \quad \vg_0 \in U . 
\end{align}
\end{subequations}

The backward characteristic map $\vX$ is the backward solution operator of the characteristic ODE in the sense that
\begin{gather}
\vX(\vg(t), t) = \vg_0,
\end{gather}
for all initial conditions $\vg_0$ and all $t \in \R_+$.

We can check that $\vX$ satisfies the PDE
\begin{subequations} \label{eqGroup:charMap}
\begin{align} 
\partial_t \vX + (\vu \cdot \grad) \vX = 0 & \quad \quad  \forall (\vx, t) \in U \times \R_+ ,\\
 \vX(\vx, 0) = \vx .&
\end{align}
\end{subequations}

The characteristic map possesses a semigroup structure which allows for the decomposition of a long-time map into the composition of several submaps. We denote by $\vX_{[\tau_{i+1}, \tau_i]}$ the backward characteristic map for the time-interval $[\tau_i, \tau_{i+1}]$. This means that for any characteristic curve $\vg$ satisfying \eqref{eqGroup:charCurve}, we have that 
\begin{gather}
\vX_{[\tau_{i+1}, \tau_i]}(\vg(\tau_{i+1})) = \vg(\tau_i) .
\end{gather}

\begin{remark}
Note that the notation for the characteristic maps used here is slightly different from the one used in \cite{CM}. The backward map in the interval $[\tau_i, \tau_{i+1}]$ is denoted $\vX_{[\tau_{i+1}, \tau_i]}$ with the endpoints of the interval inverted to highlight that it is the backward in time transformation.
\end{remark}

The global time characteristic map can then be split into submaps using the following decomposition
\begin{gather} \label{eq:SubmapDecomp}
\vX (\cdot, t) = \vX_{[t, 0]} =  \vX_{[ \tau_1, 0]} \circ  \vX_{[\tau_2, \tau_1]} \circ \cdots \circ  \vX_{[\tau_{m-1}, \tau_{m-2}]} \circ  \vX_{[t, \tau_{m-1}]} .
\end{gather}
We will denote the global backward characteristic map $\vX_{[t, 0]}(\vx)$ by $\vX_B(\vx, t)$.

For smooth, divergence-free velocity fields, these characteristic maps are known to be diffeomorphisms for all times. Further details on this method and its application to the advection equation can be found in \cite{CME}.

Without the divergence-free assumption on the velocity field, the diffeomorphism property is not guaranteed. The evolution of the Jacobian determinant of the characteristic maps can be computed along characteristic curves using Jacobi's formula:
\begin{gather}
\frac{d}{dt} \det \left(\grad \vX_B (\vg(t),t) \right) = \tr \left(  \adj(\grad \vX_B)  \frac{d}{dt} \grad \vX_B (\vg(t),t) \right) \\ \nonumber
 =   \tr \left(  - \adj (\grad \vX_B) \cdot \grad \vX_B \cdot \grad \vu(\vg(t),t) \right) = - \det (\grad \vX_B) \div \vu ,
\end{gather}
where $\adj$ denotes the adjugate matrix. That is
\begin{gather} \label{eq:logdet}
\frac{d}{dt} \log \det ( \grad \vX_B) = (\partial_t + \vu \cdot \grad ) \log \det ( \grad \vX_B) = - \div \vu ,
\end{gather}
and the absence of finite-time blow-up of the time-integral of the divergence on all characteristic curves is required for the characteristic maps to remain diffeomorphisms.

Under the assumption that the velocity $\vu$ generates diffeomorphic characteristic maps, we can extend the CM method to the density transport problem. Consider the continuity equation
\begin{subequations} \label{eqGroup:contEqn}
\begin{align} 
\partial_t \rho + \div (\rho \vu) = 0 & \quad \quad  \forall (\vx, t) \in U \times \R_+ ,\\
 \rho(\vx, 0) = \rho_0(\vx) .&
\end{align}
\end{subequations}

In this paper, for the purpose of particle management, we will restrict our attention to the case of positive densities $\rho$ bounded away from 0. That is, we assume there exists some $a > 0$ such that $\rho(\vx, t) \geq a$ $\forall (\vx, t) \in U \times \R_+$. We also assume that $\rho_0$ is a probability density, in particular, \eqref{eqGroup:contEqn} implies that $\rho$ will then remain a probability density for all $t$.

The solution $\rho(\vx, t)$ can then be obtained from the characteristic map as follows:
\begin{gather}
\rho(\vx, t) = \rho_0 (\vX_B(\vx, t)) \det \grad \vX_B .
\end{gather}
The submap decomposition \eqref{eq:SubmapDecomp} can also be applied to this pullback.

\subsection{Density Redistribution}  \label{subsec:densityRedist}
In this section we apply the deformation map $\vX_B$ in the context of measure transport. Let $m$ denote the uniform probability measure on $U$, that is, $m(U) = \nolinebreak 1$ with constant density with respect to the Lebesgue measure. Let $\mu$ be the initial probability measure continuous with respect to uniform with density $\rho_0 \d m$. We define $\mu_t$ as the pullback measure of $\mu$ by $\vX_B(\cdot, t)$. That is, $\vX_B$ is a deterministic coupling between the probability spaces $(U, \mathcal{B}, \mu)$ and $(U, \mathcal{B}, \mu_t)$. From \eqref{eqGroup:contEqn}, we have that $\mu_t$ has density $\d \mu_t = \rho_t \d m$.

The characteristic map is generated from a chosen velocity field $\vu$. For instance, one can use the Moser flow \cite{dacorogna1990partial} given by
\begin{gather}
\vu(\vx, t) = \frac{\grad \Laplace^{-1}(\rho_0 - \rho_1)}{(1-t)\rho_0 + t \rho_1} ,
\end{gather}
to generate a coupling between two strictly positive probability densities $\rho_0$ and $\rho_1$ such that $\vX_{[1,0]}$ moves the base density $\rho_1$ to a target $\rho_0$.

For the method presented in this paper, $\vX_B$ will be a coupling between the target density $\rho_0$ and the uniform density. As a result, pullback by $\vX_B$ will ``uniformize'' $\rho_0$. For parametrized curves and surfaces, $\rho_0$ will correspond to the arclength or area-element functions defined on $U$. Pre-composition of the parametrization with $\vX_B$ will then yield an equiareal parametrization. We will examine this in further details in section \ref{sec:surfAdv}.

The transport map is generated from a heat equation for the densities: $\partial_t \rho = \Laplace \rho$ (with Neumann or periodic boundary condition), which we write as a continuity equation:
\begin{subequations} \label{eqGroup:contHeatEqn}
\begin{align} 
\partial_t \rho + \div (- \grad \rho ) = \partial_t \rho + \div (\rho \cdot - \grad \log \rho)  = 0 & \quad \quad  \forall (\vx, t) \in U \times \R_+ ,\\
\partial_\vn \rho = 0 & \quad \quad \forall (\vx, t) \in \partial U \times \R_+ ,\\
\rho(\vx, 0) = \rho_0(\vx) .&
\end{align}
\end{subequations}
The redistribution map is then obtained from:
\begin{subequations} \label{eqGroup:mapDef}
\begin{align}
\rho(\vx, t) =  \rho_0(\vX_B(\vx,t))\det \grad \vX_B(\vx, t) , \\
\vu (\vx, t) = - \grad \log \rho(\vx, t) , \label{eq:defVelo} \\
\partial_t \vX_B + (\vu \cdot \grad) \vX_B = 0 .
\end{align}
\end{subequations}

The above flow has the property that $\rho$ follows a heat equation, therefore, the maximum principle guarantees that the density stays bounded away from zero at all times. Furthermore, in the limit as $t \to \infty$, we have that $\rho$ tends to its average $\bar{\rho} = 1$, and hence formally, $\vX_B$ maps the uniform density to the target $\rho_0$. Lastly the diffusion flow consists of an $L^2$-gradient descent of the energy or a $W^2$-gradient descent of the entropy. In fact, an implicit Euler step is equivalent to a minimizing movement scheme in the Wasserstein metric \cite{jordan1998variational}. In this sense, the time-evolution of the map can be seen as an iterative process which contracts to the desired transport map.

\subsection{Energy Estimates}
The density $\rho$ follows an $L^2$-gradient descent of the Dirichlet energy with the usual energy estimate:
\begin{gather} \label{eq:defEnergy}
E(t) = \frac12 \int_U (\rho(\vx, t) - 1)^2 \d x \quad \text{with} \quad \frac{d}{dt} E(t) = - \int_U \left| \grad \rho \right|^2 \d x .
\end{gather}
Since $\rho$ is a probability density, $\rho - 1$ has zero average for all $t$, so we can apply the Poincar\'{e}-Wirtinger inequality $\| \rho - 1 \|_{L^2} \leq C \| \grad \rho \|_{L^2}$ to get the exponential decay in the $L^2$ energy:
\begin{gather} \label{eq:heatEnergyDecay}
\frac{d}{dt} E(t) \leq - \alpha E(t) ,
\end{gather}
for some constant $\alpha$. 

In practice, we will compute the backward map to a sufficiently large time $t$ to obtain a transport map between $\rho_0$ and a ``close to uniform'' distribution $\rho_t$. In terms of random variables, we let $Y$ and $Y_t$ be random variables taking value in $U$ with probability densities $\rho_0(\vx)$ and $\rho(\vx, t)$ respectively. We have by construction that $\vX_{[t,0]}(Y_t)$ has density $\rho_0$. Therefore, trivially, the random variables $\vX_{[t,0]}(Y_t)$ converge in distribution to $Y$ as $t \to \infty$. However, we are interested in the above convergence when $Y_t$ is replaced by a fixed random variable with uniform distribution, as this allows us to use $\vX_B$ to redistribute uniform random variables according to density $\rho_0$.

\begin{theorem} \label{thm:distConv}
Let $Z$ be a random variable with uniform distribution on $U$ and also define $Z_t = \vX_{[t,0]}(Z)$ for each $t$. We have that $Z_t$ converges to $Y$ in distribution as $t \to \infty$.
\end{theorem}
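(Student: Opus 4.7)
The plan is to compute the density of $Z_t$ directly via change of variables, and then show it converges to $\rho_0$ using the exponential energy decay of the heat equation. First, since $\vX_B(\cdot,t)$ is a diffeomorphism by hypothesis, write $\vX_F(\cdot,t) := \vX_B(\cdot,t)^{-1}$ for the forward map. For any bounded continuous test function $\phi$ on $U$, the substitution $\vy = \vX_B(\vx,t)$ gives
\begin{equation*}
\mathbb{E}\bigl[\phi(Z_t)\bigr] = \int_U \phi\bigl(\vX_B(\vx,t)\bigr) \d\vx = \int_U \phi(\vy)\, \det \grad \vX_F(\vy,t) \d\vy ,
\end{equation*}
so $Z_t$ admits the density $f_t(\vy) = \det \grad \vX_F(\vy,t)$.

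Next, I would relate $f_t$ explicitly to $\rho_0$ and the heat solution $\rho$. Substituting $\vx = \vX_F(\vy,t)$ into the pullback identity in \eqref{eqGroup:mapDef} and using $\det \grad \vX_B(\vX_F(\vy,t),t)\cdot \det \grad \vX_F(\vy,t) = 1$ yields
\begin{equation*}
f_t(\vy) = \frac{\rho_0(\vy)}{\rho(\vX_F(\vy,t),\,t)} .
\end{equation*}
Since $Y$ has density $\rho_0$, it then suffices to show $f_t \to \rho_0$ in $L^1(U)$; by Scheff\'e's lemma this implies weak convergence of the associated probability measures, i.e.\ $Z_t \to Y$ in distribution.

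For the $L^1$ convergence I would combine two ingredients. First, the energy decay \eqref{eq:heatEnergyDecay} gives $\|\rho(\cdot,t) - 1\|_{L^2(U)} \to 0$ exponentially; standard parabolic regularity for the Neumann heat equation then bootstraps this into uniform convergence $\rho(\cdot,t) \to 1$ on $U$, which in turn yields $\rho(\vX_F(\vy,t),t) \to 1$ uniformly in $\vy$. Second, the maximum principle furnishes a uniform lower bound $\rho(\cdot,t) \geq a > 0$, so the ratio $\rho_0(\vy)/\rho(\vX_F(\vy,t),t)$ is well defined and pointwise bounded by $\rho_0(\vy)/a$. Together these give $f_t \to \rho_0$ uniformly on $U$, whence $L^1$ convergence is immediate. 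The main technical hurdle is the bootstrap from $L^2$ energy decay to uniform convergence of $\rho(\cdot,t)$; a softer alternative, should parabolic regularity be unwelcome, is to observe that $L^2$ decay already forces convergence in measure, and then combine this with the uniform lower bound on $\rho$ and Vitali's convergence theorem to conclude $f_t \to \rho_0$ in $L^1$ directly.
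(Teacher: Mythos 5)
Your proposal is correct and follows essentially the same route as the paper: both identify the density of $Z_t$ as $\det \grad \vX_{[t,0]}^{-1} = \rho_0 / (\rho_t \circ \vX_{[t,0]}^{-1})$ and combine the heat-equation decay of $\rho_t - 1$ with the maximum-principle bound $\inf \rho_t \geq \inf \rho_0 > 0$. The only divergence is the concluding step: the paper bounds the $L^2$ density error directly by $\| \rho_t - 1 \|_{L^2} \, \| \det \grad \vX_{[t,0]}^{-1} \|_{L^\infty}^{1/2}$ via a change of variables, yielding an explicit exponential rate, whereas you pass to $L^1$ convergence via parabolic regularity (or Vitali) and Scheff\'e's lemma, which is sound but requires an extra regularity ingredient and forgoes the quantitative estimate.
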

\begin{proof}
We have that $Y$ has law $\mu$. We also denote by $\nu_t$ the law of $Z_t$, this is the pushforward measure of $m$ by $\vX_{[t,0]}$. From definition we have
\begin{gather}
\nu_t (A) = m (\vX_{[t,0]}^{-1} (A) ) \quad \text{and} \quad \mu (A) = \mu_t (\vX_{[t,0]}^{-1} (A) ),
\end{gather}
where $\mu_t$ is the law of $Y_t$ and is the pullback measure of $\mu$ by $\vX_{[t,0]}$. Therefore, we have
\begin{gather}
| \nu_t (A) - \mu(A) | = \left| m \left(\vX_{[t,0]}^{-1} (A) \right) - \mu_t \left(\vX_{[t,0]}^{-1} (A) \right) \right| .
\end{gather}

Given that $\vX_{[t,0]}$ is a diffeomorphism on $U$, we have established that
\begin{gather}
Y_t \xrightarrow[]{d} Z \iff Z_t \xrightarrow[]{d} Y,
\end{gather}
since the total variation norms $\| \nu_t - \mu \|_{TV}$ and $\| \mu_t - m \|_{TV}$ are equal for any given $t$.

More precise estimates can also be obtained by looking at the measure densities from taking a Radon-Nikodym derivative with respect to $m$. We have that the measure densities are given by $\d \nu_t = \frac{\d \nu_t}{\d m} \d m$ and $\d \mu = \frac{\d \mu}{\d m} \d m$, with 
\begin{gather}
\frac{\d \mu}{\d m} = \rho_0 = \rho_t (\vX_{[t,0]}^{-1}) \det \grad \vX_{[t,0]}^{-1} \quad \text{and} \quad \frac{\d \nu_t}{\d m} = \det \grad \vX_{[t,0]}^{-1} .
\end{gather}
The $L^2$ distance between the densities is then given by
\begin{gather}
\left\| \frac{\d \nu_t}{\d m} - \frac{\d \mu}{\d m} \right\|_{L^2} = \left( \int_U \left| \rho_0 -  \det \grad \vX_{[t,0]}^{-1} \right|^2 \d m \right)^{\frac12} \nonumber \\
= \left( \int_U \left| \rho_t \circ \vX_{[t,0]}^{-1}  -  1  \right|^2 (\det \grad \vX_{[t,0]}^{-1})^2 \d m \right)^{\frac12} = \left( \int_{\vX_{[t,0]}^{-1}(U)} | \rho_t  -  1  |^2 \det \grad \vX_{[t,0]}^{-1} \d m \right)^{\frac12} \nonumber \\
\leq \| \rho_t - 1 \|_{L^2} \| \det \grad \vX_{[t,0]}^{-1} \|_{L^\infty}^{\frac12} .
\end{gather}

We can further bound $ \det \grad \vX_{[t,0]}^{-1}$ using the maximum principle on $\rho_t$,
\begin{gather}
\det \grad \vX_{[t,0]}^{-1} = \frac{\rho_0(\vx)}{\rho_t ( \vX_{[t,0]}^{-1}(\vx) )} \leq \frac{\sup \rho_0 }{\inf \rho_t} \leq \frac{\sup \rho_0 }{\inf \rho_0} .
\end{gather}

Using the decay rate \eqref{eq:heatEnergyDecay}, this effectively gives us an \emph{a priori} estimate on the $L^2$ density error of  $ \sqrt{2 \frac{\sup \rho_0 }{\inf \rho_0} E(0) } \cdot e^{-\alpha t/2}$ as well as an \emph{a posteriori} estimate $\sqrt{\frac{\sup \rho_0 }{\inf \rho_t} } \cdot \| \rho_t - 1 \|_{L^2}$.
\end{proof}

\section{Numerical Implementation} \label{sec:densMapNum}

\subsection{Characteristic Mapping method}
In this section we briefly describe the Characteristic Mapping method for a given velocity field $\vu$. We follow the same framework as in \cite{nave2010gradient, seibold2011jet, CME} where more details can be found.

The CM method is essentially composed of a spatial interpolation operator and a time-stepping operator. For the method presented in this paper, we will use a Hermite interpolation in space. Let $\gG$ be a grid on the domain $U$ with grid points $\vx_{i,j}$ and cells $C_{i,j}$. We define $\fV_{\gG, m} \subset C^{m} (U)$ to be the space of Hermite interpolants on $\gG$ of order $2m+1$. The interpolation operator $\He_\gG : C^m (U) \to \fV_{\gG,m}$ is then the projection operator defined by choosing the unique piecewise Hermite polynomial which matches the derivatives $\partial^{\vec{\alpha}} f$ at grid points of $\gG$ for $\vec{\alpha} \in \{0,1, \ldots, m \}^2$. In this paper, we will work exclusively with Hermite linear or cubic interpolants, i.e. $m = 0$ or $1$. For a function $f$ that is at least $2m+2$ times continuously differentiable in space, we have that
\begin{gather} \label{eq:HInterpError}
\left\| \partial^{\vec{\alpha}} \left( f - \He_\gG[f] \right) \right\|_\infty = \bigO \left(\incr{x}^{2m+2-|\vec{\alpha}|} \right) ,
\end{gather}
where $\incr{x}$ is the cell width of the grid $\gG$ and $|\vec{\alpha}|$ denotes $\max (\alpha_1, \alpha_2)$. Inside each cell of $\gG$, $\He_\gG [f]$ is obtained from a tensor product of degree $2m+1$ polynomials in each dimension. From Taylor expansion, we see that the leading order term of the difference between $f$ and this polynomial is $(x - x_i)^{m+1}(x-x_{i+1})^{m+1}$. This means that interpolation or extrapolation of the polynomial at a point $\epsilon < \incr{x}$ away from a grid point yields $\bigO (\epsilon^{m+1 - | \alpha | } \incr{x}^{m+1})$ error.

Finally, it is important to note that we gain an order of accuracy when interpolating the first derivative at cell centres. Indeed, the leading order term in the error in the cell $[x_i, x_{i+1}]$ can be rewritten as $\left( ( x - \frac12 (x_{i}+x_{i+1})  )^2 - \frac14 \incr{x}^2  \right)^{m+1}$. At $x = \frac12 (x_i + x_{i+1})$ this function has vanishing $1^{st}$ derivatives for all $m$. In particular, this means that evaluation of the gradient and first mixed derivatives of a Hermite cubic interpolant is order $\bigO(h_x^4)$ accurate and $\bigO(h_x^2)$ for linear interpolants.

We outline below the algorithm for the time evolution of the characteristic map. We will denote by $\vhX_B \in \fV_{\gG, m}$ the numerical discretization of the exact characteristic map $\vX_B$.

Given discrete time-steps $t_n$, the time-discretization consists in approximating the ``one-step map'' $\vX_{[t_{n+1}, t_n]}$ obtained from integrating the velocity backwards in time. We denote by $\tilde{\vu}$ the approximate numerical velocity. This velocity is defined to be a constant-in-time interpolation for each interval $[t_n, t_{n+1}]$, and is obtained from approximating the true velocity at time $t_{n+1}$. The one-step map is defined using an Euler step:
\begin{gather} \label{eq:defOneStepNum}
\tilde{\vX}_{[t_{n+1}, t_n]}(\vx) = \vx - \incr{t} \tilde{\vu}(\vx) ,
\end{gather}
and the characteristic map is updated by taking the Hermite interpolant of the composition
\begin{gather} \label{eq:mapStepping}
\vhX^{n+1}_B = \He_\gG \left[ \vhX^{n}_B \circ \tilde{\vX}_{[t_{n+1}, t_{n}]}  \right] .
\end{gather}

The definition of $\tilde{\vu}$ is presented in the next section. We note here that once $\tilde{\vu}$ is defined for the interval $[t_n, t_{n+1}]$, the computation of $\tilde{\vX}_{[t_{n+1}, t_n]}$ can be split up into time-subintervals using the same $\tilde{\vu}$. This allows us to pick a small enough time step for the map updates to satisfy the CFL condition while reducing frequency of the calculations for $\tilde{\vu}$.

The error of the map evolution is globally first order in time. The contribution of spatial error to the time stepping comes from the projection of the map composition on the space of Hermite interpolants $\fV_\gG$. The following lemma provides an estimate on this error.

\begin{lemma} \label{lem:stepError}
For $f : U \to \mathbb{R}$ smooth and $\vT : U \to U$ a diffeomorphism with $\vT - \vI = \bigO (\epsilon)$, that is $\vT$ is an order $\epsilon$ perturbation of the identity map. We have that, in the limit of small $\epsilon$,
\begin{gather}
\left\| \partial^{\vec{\alpha}} \left( \He_\gG [ f \circ \vT ] - \He_\gG [f] \circ \vT \right) \right\|_\infty = \bigO (\epsilon \incr{x}^{2m+1-|\vec{\alpha}|} )
\end{gather}
\end{lemma}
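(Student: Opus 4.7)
The plan is to exploit the smallness of $\vT-\vI$ via a first-order Taylor expansion in $\epsilon$ of both sides of the claimed identity and to recognize that the $\epsilon$-coefficient is exactly the commutator of $\He_\gG$ with a first-order differential operator, which can then be bounded using \eqref{eq:HInterpError}.

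First I would write $\vT(\vx) = \vx + \epsilon \phi(\vx)$ for a smooth $\phi = O(1)$. A Taylor expansion in $\epsilon$ yields
\begin{equation*}
f\circ\vT = f + \epsilon\,(\phi\cdot\grad) f + \epsilon^2 R_1,
\end{equation*}
where $R_1$ is smooth in $\vx$ with $\partial^{\vec\alpha} R_1 = O(1)$ uniformly in $\epsilon$. Applying $\He_\gG$ (a linear operator) gives
\begin{equation*}
\He_\gG[f\circ\vT] = \He_\gG[f] + \epsilon\,\He_\gG[(\phi\cdot\grad)f] + \epsilon^2 \He_\gG[R_1].
\end{equation*}
On the other side, I would Taylor expand $\He_\gG[f]\circ\vT$ along the direction $\epsilon\phi$:
\begin{equation*}
\He_\gG[f]\circ\vT = \He_\gG[f] + \epsilon\,(\phi\cdot\grad)\He_\gG[f] + \epsilon^2 R_2.
\end{equation*}
Subtracting, the zeroth-order terms cancel exactly and one is left with
\begin{equation*}
\He_\gG[f\circ\vT] - \He_\gG[f]\circ\vT = \epsilon\bigl\{ \He_\gG[(\phi\cdot\grad)f] - (\phi\cdot\grad)\He_\gG[f] \bigr\} + \epsilon^2\bigl( \He_\gG[R_1] - R_2 \bigr).
\end{equation*}

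The main step is then to bound the commutator in braces, which I would split as
\begin{equation*}
\He_\gG[(\phi\cdot\grad)f] - (\phi\cdot\grad)\He_\gG[f] = \bigl(\He_\gG - \fundI\bigr)\bigl[(\phi\cdot\grad)f\bigr] - (\phi\cdot\grad)\bigl(\He_\gG[f] - f\bigr).
\end{equation*}
The first piece is a standard interpolation error on the smooth function $(\phi\cdot\grad)f$, so by \eqref{eq:HInterpError} its $\partial^{\vec\alpha}$-norm is $O(\incr{x}^{2m+2-|\vec\alpha|})$. For the second piece I would use the Leibniz rule and again \eqref{eq:HInterpError}: the worst term puts all derivatives on $\grad(f-\He_\gG[f])$, contributing $O(\incr{x}^{2m+2-(|\vec\alpha|+1)}) = O(\incr{x}^{2m+1-|\vec\alpha|})$, which dominates the first piece. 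Multiplying by $\epsilon$ yields the claimed bound.

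The remainder $\epsilon^2(\He_\gG[R_1] - R_2)$ must be shown to be absorbed into $o(\epsilon)$. Since $R_1$ is smooth uniformly in $\epsilon$, its Hermite interpolant is uniformly bounded in the relevant $\partial^{\vec\alpha}$ norms. The delicate piece is $R_2$, which contains $\text{Hess}\,\He_\gG[f]$: this Hessian is only piecewise smooth and $\He_\gG[f]$ is globally only $C^m$, so I would argue cell by cell, observing that inside each cell $\He_\gG[f]$ is a tensor-product polynomial of bidegree $2m+1$ and hence has uniformly bounded derivatives (independent of $\incr{x}$ for $f$ smooth), with at worst jumps across cell edges that are still harmless for the $L^\infty$ norm. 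In the stated regime $\epsilon \to 0$, this gives $\epsilon^2\cdot O(1) = o(\epsilon\,\incr{x}^{2m+1-|\vec\alpha|})$, completing the proof. I expect the cell-by-cell handling of the Taylor remainder $R_2$, together with the Leibniz bookkeeping needed to verify that the leading commutator term really does scale like $\incr{x}^{2m+1-|\vec\alpha|}$ and not $\incr{x}^{2m+2-|\vec\alpha|}$, to be the main technical hurdle.
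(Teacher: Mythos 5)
Your proposal is correct and follows essentially the same route as the paper's proof: Taylor-expand both $\He_\gG[f\circ\vT]$ and $\He_\gG[f]\circ\vT$ in $\epsilon$, cancel the zeroth-order terms, and bound the resulting $\bigO(\epsilon)$ commutator via the interpolation error estimate \eqref{eq:HInterpError} applied with one extra derivative, which is exactly where the $\incr{x}^{2m+1-|\vec{\alpha}|}$ rather than $\incr{x}^{2m+2-|\vec{\alpha}|}$ scaling arises. Your explicit commutator splitting and the cell-by-cell treatment of the Taylor remainder of the only piecewise-smooth interpolant are simply more careful spellings-out of the steps the paper absorbs into its higher-order terms.
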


\begin{proof}
We write $\vT$ as $\vT(\vx) = \vx + \epsilon \vv(\vx)$. We consider first the Taylor expansion of $f$ at $\vx$:
\begin{gather}
f( \vx + \epsilon \vv(\vx) ) = f(\vx) + \sum_{i \in \{1, 2\} } \epsilon v^i(\vx) \partial_i f(\vx) + \sum_{i,j \in \{1, 2\} } v^i(\vx) v^j(\vx)  \frac{\epsilon^2}{2} \partial_{i,j} f(\vx)  + H.O.T. .
\end{gather}

Applying $\He_\gG$ to $f \circ \vT$ and to $f$, we get
\begin{subequations} \label{eqGroup:compareMapSteps}
\begin{align}
\He_\gG [ f \circ \vT ] = \He_\gG [f] + \sum_{i \in \{1, 2\} }  \epsilon \He_\gG [v^i \partial_i f  ] + \frac{\epsilon^2}{2} \He_\gG [v^i v^j \partial_{i,j} f ]  + H.O.T. ,  \label{eq:projComp} \\
\He_\gG [ f] \circ \vT  = \He_\gG [f] + \sum_{i \in \{1, 2\} }  \epsilon  v^i \partial_i \He_\gG [ f]  + \frac{\epsilon^2}{2} v^i v^j  \partial_{i,j} \He_\gG [f] + H.O.T. . \label{eq:compProj}
\end{align}
\end{subequations}

We omit the interpolation on \eqref{eq:projComp} for all order $\epsilon$ and higher terms as replacing the interpolant with the interpolated function contributes a $\bigO (\epsilon^{|\vec{\alpha} | } \incr{x}^{2m+2})$, $ |\vec{\alpha} | \geq 1$ term which we absorb in the higher order terms. Similarly, we can use a Taylor expansion of $\He_G[f]$ in \eqref{eq:compProj} since the error incurred from extending an order $| \vec{\alpha} |$ derivative of the Hermite interpolant outside a cell is of order $\incr{x}^{2m+2-|\vec{\alpha} |}$ and hence can also be absorbed in the higher order terms. Taking the difference of the two equations in \eqref{eqGroup:compareMapSteps}, we get
\begin{gather}
\He_\gG [ f \circ \vT ] - \He_\gG [ f] \circ \vT = \epsilon \partial_i (f - \He_\gG [ f]) v^i  + \frac{\epsilon^2}{2} \partial_{i,j} (f -\He_\gG [f]) v^i v^j  + H.O.T. .
\end{gather}

In light of \eqref{eq:HInterpError}, we have that
\begin{gather}
\He_\gG [ f \circ \vT ] - \He_\gG [f] \circ \vT = \bigO (\epsilon \incr{x}^{2m+1} ) .
\end{gather}
and therefore
\begin{gather}
\left\| \partial^{\vec{\alpha}} \left( \He_\gG [ f \circ \vT ] - \He_\gG [f] \circ \vT \right) \right\|_\infty = \bigO (\epsilon \incr{x}^{2m+1-|\vec{\alpha}|} )
\end{gather}
\end{proof}

\subsection{Diffusion Flow Velocity} \label{sec:diffFlow}
The velocity $\vu = - \grad \log \rho$ is used to evolved the characteristic map $\vX_B$ whereas the density $\rho$ is a volume form obtained from pullback by $\vX_B$. For the numerical method, we will use a natural staggered grids approach for the discretization of these two quantities in order to obtain a spatially compact scheme. Similar primal-dual grids approaches for solving hyperbolic and parabolic equations using Hermite interpolation have been explored in \cite{appelo2012advection, appelo2018hermite, hagstrom2015solving} where the $H^{m+1}$-seminorm decreasing property of Hermite interpolation was used to design stable methods with high order accuracy.

Here, we use the grid $\gG$ for the definition of the characteristic map $\vX_B$ and the velocity field $\vu$. We define $\gD$ to be the staggered grid of $\gG$ with grid points placed at the cell centers of $\gG$, where the density at time $t_n$ is sampled. We define the grid function $\rho^n_\gD$ to be the evaluation of the following function on the grid $\gD$:
\begin{gather} \label{eq:defNumDens}
\rho^n(\vx) = \rho_0(\vhX^n_B(\vx)) \det \grad \vhX^n_B(\vx) .
\end{gather}
Since the evaluation of $\det \grad \vhX^n_B(\vx)$ occurs at the cell centers of $\gG$, we gain an order of accuracy on the gradient. Therefore, $\rho^n_\gD$ is accurate to order $\bigO (\incr{x}^{2m+2})$ when exact map values are provided. This staggered grid approach is similar in spirit to the primal-dual grid method developed by Appelo et al. in \cite{appelo2018hermite} to solve the scalar wave equation. In that method, a full time-step update of the displacement function goes through two half-step integrations of the velocity function, where in each half-step, the velocity function is computed on a grid dual to the one where the previous displacement function was defined. In that case, the smoothing property of the Hermite interpolation from primal to dual grid yielded stable schemes using very high order interpolation.

In the method proposed here, the velocity field is defined from $\rho^n_\gD$ by taking the log-gradient. In order to avoid the time step constraint of an explicit heat step, we compute the velocity corresponding to an implicit Euler step of the heat equation. 
\begin{subequations} \label{eqGroup:defNumVelo}
\begin{align} 
\tilde{\rho}_\gD = (I - \incr{t} \Laplace )^{-1} \rho^n_\gD , \label{eq:implHeat} \\
\tilde{\vu}(\vx) = - \grad \He_\gD \left[ \log \tilde{\rho}_\gD \right] . \label{eq:stepVeloDef}
\end{align}
\end{subequations}

Indeed, the velocity field $-\grad \log \rho$ extracted from the heat equation is stiff in time and therefore the characteristic ODEs \eqref{eqGroup:charCurve} are stiff. A fully implicit time-stepping method would be complicated and costly due to the coupling of $\rho$ and $\vX_B$, instead, we replace the true velocity $\vu$ with the above constant-in-time implicit Euler approximation. The approach can therefore be thought of as a time-regularization of the characteristic ODEs using a relaxation which is consistent with the underlying heat equation. As reference and clarification, we summarize redistribution algorithm in the pseudocode \ref{alg:heat}.

%
%

\begin{algorithm}
  \caption{CM method for the heat equation on the space of probability densities.
    \label{alg:heat}}
  \begin{algorithmic}[1]
    \Require{Initial density $\rho_0$, staggered grids $\gG$ and $\gD$, time step $\incr{t}$, final time $T$}
    \Function{HeatFlowMap}{$\rho_0$, $[0,T]$}
      \State Initialize $t \gets 0$, $\vhX_B \gets {\bm {id}}$ \Comment{$\bm{id}$ is the identity map}
      \While{$t < T$}
      	\Let{$\rho^n_\gD$}{$\left[ \rho_0(\vhX_B) \det \grad \vhX_B \right]_\gD$} \Comment{evaluation of \eqref{eq:defNumDens} on $\gD$}
      	\Let{$\tilde{\rho}_\gD$}{$(I - \incr{t} \Laplace )^{-1} \rho^n_\gD$} 
      	\Define{$\vhX_{[t+\incr{t}, t]}(\vx)$}{$\vx + \incr{t} \grad \He_\gD \left[ \log \tilde{\rho}_\gD \right](\vx)$}  \Comment{combining \eqref{eq:stepVeloDef} and \eqref{eq:defOneStepNum}}
      	\Let{$\vhX_B$}{$\He_\gG \left[ \vhX_B \circ \vhX_{[t +\incr{t}, t]} \right]$}
      	\Let{$t$}{$t+\incr{t}$}
      \EndWhile
      \State \Return{$\vhX_B$}
    \EndFunction
  \end{algorithmic}
\end{algorithm}

Using the velocity defined in \eqref{eqGroup:defNumVelo}, we have that the one-step map in \eqref{eq:defOneStepNum} has $\bigO (\incr{t}^2 + \incr{t} \incr{x}^3)$ local truncation error for Hermite cubic interpolants and $\bigO (\incr{t}^2 +\incr{t} \incr{x}^2)$ for linear. Indeed, the Hermite grid data for the linear interpolation have $\bigO (\incr{x}^2)$ error due to being evaluated at cell centres. For cubic interpolation, the derivative data of the velocity field are only available to $\bigO(\incr{x}^2)$, hence the $\bigO(\incr{x}^3)$ spatial error for the one-step map. Given the accuracy of the above one-step maps, we have from lemma \ref{lem:stepError} that the global truncation errors are $\bigO (\incr{t} + \incr{x})$ for linear interpolants, and $\bigO(\incr{t} + \incr{x}^2)$ for cubics.

We apply the redistribution algorithm on a toy problem to illustrate the $L^2$ energy decay. For this test, we take the domain $U$ to be the flat torus $[0,1] \times [0,1]$. The target density $\rho_0$ is concentrated in a band of width $w = 0.15$ around a circle of radius $r = 0.25$ centred at $\vx_c = (0.5, 0.5)$. The minimum density is set at $0.75$.
\begin{subequations} \label{eqGroup:defRho0}
\begin{gather} 
\rho_0 (\vx) = 1 + 0.25 \eta (\vx)  \\
\eta(\vx) = c ( \eta_0 (| \vx - \vx_c | - r) - \bar{\eta}_0 )  \label{eq:defDensVar} \\
\eta_0(s) = \exp ( -(1-(2s/w)^2)^{-1} ) 
\end{gather}
\end{subequations}
Here, the constant $\bar{\eta}_0$ is the average of $\eta_0 (| \vx - \vx_c | - r)$ and $c$ is chosen so that $\min_\vx \eta(\vx) = -1$.

We test the CM method for density redistribution by running the algorithm described in section \ref{sec:densMapNum} to time $1$ using Hermite cubic interpolation on various grids of size $N$ and track the decay of $\mathcal{E}$, $\incr{t}$ is chosen to be $0.1/N$. The results are shown in figure \ref{fig:redist}, the backward map deforms the domain to concentrate in the selected annulus, the residual energy also exhibits the exponential decay from the diffusion equation.

\begin{figure}[h]
\centering
\begin{subfigure}[t]{0.4\linewidth}
\centering
\includegraphics[width= 0.8\linewidth]{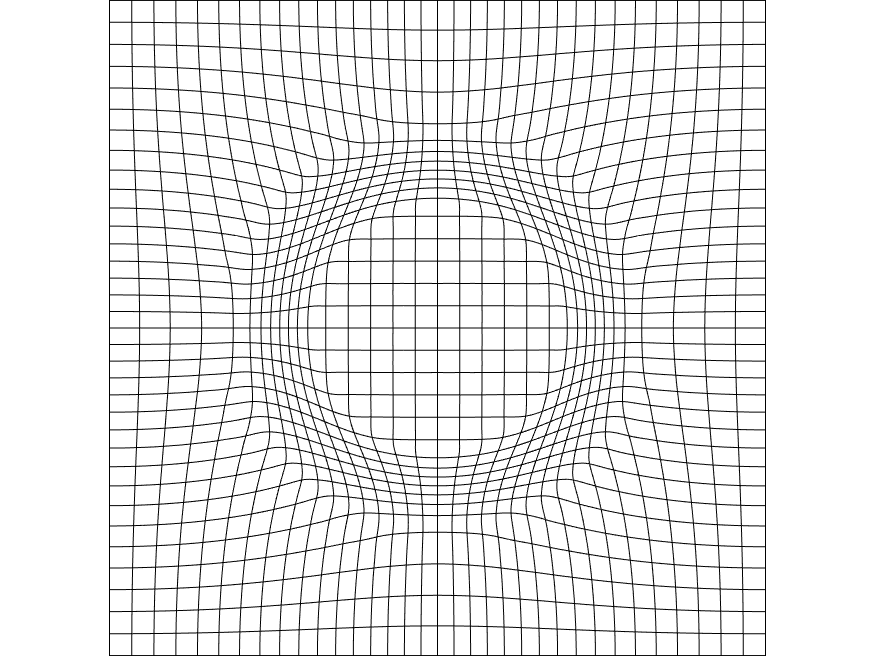}
\caption{Domain deformation from $\vhX_{[1,0]}$.}
\label{subfig:map}
\end{subfigure}
\begin{subfigure}[t]{0.4\linewidth}
\centering
\includegraphics[width= 0.8\linewidth]{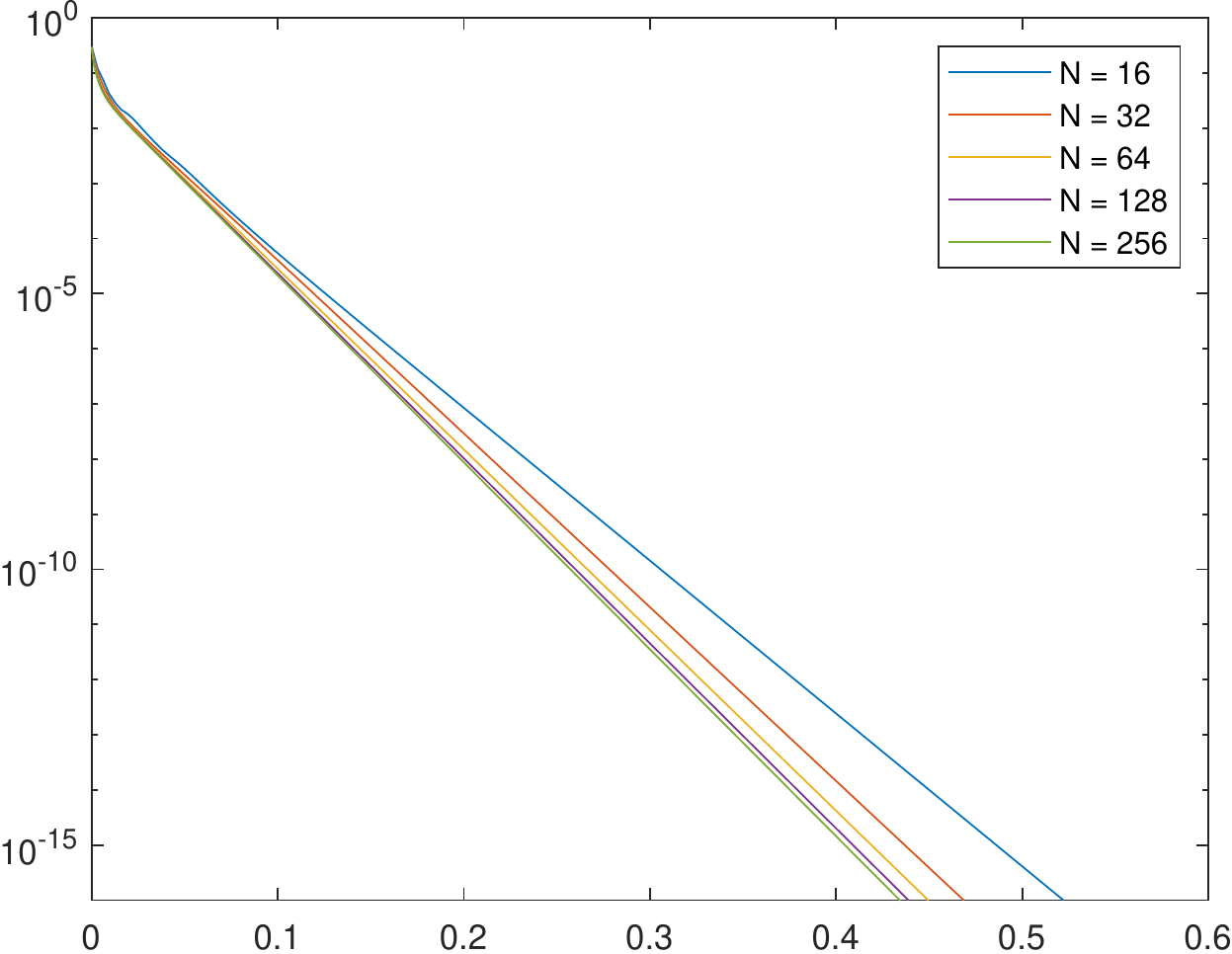}
\caption{Decay of $\mathcal{E}$ for various grids.}
\label{subfig:L2ECubic}
\end{subfigure}
\caption{Diffusion flow density redistribution using CM method.}
\label{fig:redist}
\end{figure}

\section{Application to Surface Advection} \label{sec:surfAdv}

In this section, we apply the density transport method described in section \ref{subsec:densityRedist} to the advection of parametric curves and surfaces in 3-dimensional ambient space. The density transport map will be used to maintain the proper sampling of the moving surface as it stretches and shrinks during its evolution.

\subsection{Evolution of Parametric Surfaces using CM Method}
We briefly describe here the algorithm used to evolve parametric surfaces under a given velocity field in 3D.

Let $\Omega$ be a 3-dimensional domain. For simplicity we assume $\Omega = \mathbb{T}^3$ the flat 3-torus. Let $\vv : \Omega \times \mathbb{R}_+ \to \mathbb{R}^3$ be a given velocity field. Here we assume that $\vv$ is smooth and divergence-free so that the domain transformation it generates is a smooth diffeomorphism of $\Omega$ for all times. We have the same characteristic structure as described in section \ref{subsec:CM} and in \cite{CM, CME}. We denote by $\vPhi_F$ and $\vPhi_B$ the forward and backward characteristic maps, they satisfy the following equations:
\begin{subequations} \label{eqGroup:forwardFlow}
\begin{align} 
\partial_t \vPhi_F (\vp, t) = \vv (\vPhi_F(\vp, t),t) &  \quad \quad  \forall (\vp, t) \in \Omega \times \R_+ , \\
(\partial_t + \vv \cdot \grad )  \vPhi_B (\vp, t) = 0 &  \quad \quad  \forall (\vp, t) \in \Omega \times \R_+ , \\
\vPhi_F(\vp, 0) = \vPhi_B(\vp, 0) = \vp &  \quad \quad  \forall \vp \in \Omega .
\end{align}
\end{subequations}

The diffeomorphisms are the forward and backward flow-maps of the velocity field $\vv$ and have the characteristic structure
\begin{gather}
\vPhi_F(\gamma(0), t) = \gamma(t), \quad \vPhi_B(\gamma(t), t) = \gamma(0) \quad \quad \forall t \in \R_+ ,
\end{gather}
for any characteristic curve $\gamma$ satisfying
\begin{gather}
\partial_t \gamma (t) = \vv (\gamma(t),t) .
\end{gather}
The forward and backward maps are inverse transformations for all times, i.e. 
\begin{gather} \label{eq:invMaps}
\vPhi_F( \vPhi_B(\vp, t), t) = \vPhi_B( \vPhi_F(\vp, t), t) = \vp \quad \quad \forall t \geq 0 .
\end{gather}

We denote by $\vPhi_{[t_1, t_2]}$ the forward map in the time interval $[t_1, t_2]$ with $\vPhi_F := \vPhi_{[0, t]}$, and we use $\vPhi_{[t_2, t_1]}$ to denote the backward map in the same time-interval, with $\vPhi_B := \vPhi_{[t, 0]}$. The notation $\vphi$ is used for the numerical discretization of the $\vPhi$ maps. The time-stepping can be summarized as
\begin{subequations} \label{eqGroup:FBsteps}
\begin{align}
\vphi_{[0, t + \incr{t}]} = \He_\gM \left[\vphi_{[t, t + \incr{t}]}  \circ \vphi_{[0, t]} \right] , \label{eq:stepMapForw} \\
\vphi_{[t + \incr{t}, 0]} = \He_\gM \left[\vphi_{[t, 0]}  \circ \vphi_{[t + \incr{t}, t]} \right] .
\end{align}
\end{subequations}
for some grid $\gM$ on $\Omega$. The identity $\vphi_{[t + \incr{t}, t]} = \vphi_{[t, t + \incr{t}]}^{-1}$ can be guaranteed to high precision by employing higher order ODE integration schemes, and hence the error on the property
\begin{gather}
\vPhi_F \circ \vPhi_B = \vPhi_B \circ \vPhi_F = \vI .
\end{gather}
stems mainly from the representation quality of the interpolation operator $\He_\gM$, which we will control using a dynamic remapping technique described below.

It is well-known that the flow-map possesses a semigroup property which allows for the decomposition of a long-time map into several submaps; this can be used to achieve high resolution representation of the deformation using coarse grid computations. We use the same time-decomposition strategy as in \cite{CME} to represent the global-time maps:
\begin{subequations} \label{eqGroup:submaps3D}
\begin{align}
\vphi_F = \vphi_{[0, t]} = \vphi_{[\tau_{n-1}, t]} \circ \vphi_{[\tau_{n-2}, \tau_{n-1}]} \circ \dots \circ \vphi_{[0, \tau_1]} ,  \\
\vphi_B = \vphi_{[t, 0]} = \vphi_{[\tau_1, 0]}  \circ \dots \circ \vphi_{[\tau_{n-1}, \tau_{n-2}]} \circ \vphi_{[t, \tau_{n-1}]} .
\end{align}
\end{subequations}

The inverse property \eqref{eq:invMaps} is controlled numerically by the time-stepping \eqref{eqGroup:FBsteps}. This gives us an alternative \emph{a posteriori} error estimate for choosing the remapping times $\tau_i$. In \cite{CME}, in the context of the incompressible Euler equations, the remapping time is chosen to the be first time the volume preservation error of the map exceeds some threshold. For surface advection, the velocity is not strictly constrained to divergence free, and volume preservation is not a main concern. Therefore we use the following composition error as remapping criterion.
\begin{gather}
\epsilon = \max \left( \| \vphi_{[\tau_{i-1}, t]} \circ \vphi_{[t, \tau_{i-1}]} - \vI \|_\infty, \| \vphi_{[t, \tau_{i-1}]} \circ \vphi_{[\tau_{i-1}, t]} - \vI \|_\infty  \right) .
\end{gather}
We then define $\tau_i$ to be the first time $t$ where the above error exceeds some chosen threshold.

The forward and backward characteristic maps give us solution operators to the advection problem. Let $S_0 \subset \Omega$ be an initial curve or surface moving following the velocity field $\vv$. Let $S_t$ be the surface at time $t$. Using the characteristic maps, we have two equivalent definitions for $S_t$, one implicit and the other, explicit:
\begin{subequations} \label{eqGroup:defSurf}
\begin{align}
& S_t = \left\{ \vp \in \Omega \; | \; \vPhi_B(\vp, t) \in S_0    \right\} ,  \label{eq:implicitSurfDef}\\
\text{or} \quad & S_t = \left\{ \vPhi_F(\vp, t) \; | \; \vp \in S_0    \right\}. \label{eq:explicitSurfDef}
\end{align}
\end{subequations}

Equation \eqref{eq:implicitSurfDef} uses the same Eulerian definition as in level-set methods. When $S_0$ is expressed as the zero-level-set of a function $\psi$, we have that $S_t$ is given by the zero-level-set of the advected function $\psi \circ \vPhi_B$. This approach is studied in \cite{CM}, a review of recent advances in level-set methods can be found in \cite{gibou2018review}.

Equation \eqref{eq:explicitSurfDef} is a Lagrangian definition, where surfaces are defined explicitly through a parametrization function. This is suitable for curves and surfaces which do not admit a level-set representation, for instance for open curves or non-orientable surfaces. Let $U$ be the parameter space and $\vP_0 : U \to S_0 \subset \Omega$, a regular parametrization of the surface at time 0, that is, we assume the mapping between $U$ and $S_0$ to be a diffeomorphism. The parametrization of $S_t$ is then given by
\begin{gather} \label{eq:evolveParam}
\vP_t = \vPhi_{[0, t]} \circ \vP_0 .
\end{gather}

Numerically speaking this method evolves the parametrization function in time using the solution operator $\vphi_F$. Compared to traditional methods where the surface is sampled and sample points evolved individually, the CM method provides a functional definition of the parametrization function defined everywhere in $U$. Similar to the backward map method in \cite{CM}, this approach also provides arbitrary resolution of the parametrization function $\vP_t$, and hence of $S_t$. The surface $S_t$ can be arbitrarily sampled at any time by evaluating the pushforward operator $\vphi_F$.

\subsection{Equiareal Redistribution} \label{subsec:equiRedist}
A curve or surface $S_t$ can be sampled by choosing sample points $\vy_i \in U$ and evaluating $\vp_i(t) = \vP_t(\vy_i)$ to represent $S_t$ discretely. However, a uniform distribution of $\vy_i$ in $U$ does not necessarily lead to well-distributed marker points $\vp_i$ on $S_t$. Indeed, as the surface stretches and deforms under the flow, some regions may expand and become sparsely sampled. We can quantify this by evaluating the area-element $A_t$ from the first fundamental form $\fundI_t = \grad \vP_t^T \grad \vP_t$.
\begin{gather} \label{eq:defAreaElement}
A_t = \sqrt{\det \fundI_t } = \sqrt{ \det \grad \vP_0^T \grad \vPhi_F^T \grad \vPhi_F \grad \vP_0 } .
\end{gather}

We can compute the time-evolution of $A_t$. Using
\begin{gather}
\partial_t \fundI_t = \grad \vP_t^T  (\grad \vv^T + \grad \vv) \grad \vP_t ,
\end{gather}
and applying Jacobi's rule for derivatives of matrix determinants. We have
\begin{gather}
\partial_t A_t = \frac12 A_t \tr \left( \fundI^{-1} \partial_t \fundI  \right) = \frac12 A_t \tr \left( (\grad \vP_t^T \grad \vP_t)^{-1}  \grad \vP_t^T  (\grad \vv^T + \grad \vv) \grad \vP_t  \right) .
\end{gather}
Using the cyclic property of the trace operator, we can rewrite this as
\begin{gather}
\partial_t A_t = A_t \tr \left( \grad \vP_t  (\grad \vP_t^T \grad \vP_t)^{-1}  \grad \vP_t^T \grad \vv   \right) = A_t \tr \left( \Pi_{S_t} \grad \vv   \right) =  A_t \Div|_{S_t} \vv ,
\end{gather}
where $\Pi_{S_t} = \grad \vP_t  (\grad \vP_t^T \grad \vP_t)^{-1} \grad \vP_t^T$ is the orthogonal projection operator onto the tangent space of $S_t$. Therefore, the area-element grows exponentially at rate $\Div|_{S_t} \vv$ corresponding to the divergence of the flow along the tangent space of $S_t$.

The approach we take to control the growth of the area-element is to modify the parametrization function $\vP_t$ by applying a transformation on the parametric space $U$. We take $A_t$ as a probability density on $U$ and apply the method described in section \ref{subsec:densityRedist} to obtain a mapping $\vX_B : U \to U$ which pushes the uniform density to the density $A_t$. The modified parametrization
\begin{gather}
\vQ_t = \vP_t \circ \vX_{[t, 0]} : U \to S_t \subset \Omega
\end{gather}
should then have area-element $\sqrt{\det \fundI }$ equal to a constant. That is to say, if the points $\vy_i$ are uniformly distributed in $U$, then the sample points $\vp_i$ are uniform on $S_t$. In practice, it is more efficient to run the surface advection and the area redistribution alongside each other, this means that instead of a fixed initial condition for the diffusion equation of section \ref{subsec:densityRedist}, we will use a time-dependent ``initial density'' given by $A_t$. This can be formalized as follows.

We define probability densities on $U$:
\begin{subequations} \label{eqGroup:defDensities}
\begin{align}
& \rho_0(\vx, t) = \frac{1}{\rho_{A}} \sqrt{\det \grad \vP_t^T \grad \vP_t}  , \\
& \rho(\vx, t) = \frac{1}{\rho_{A}}  \sqrt{\det \grad \vQ_t^T \grad \vQ_t} , \\
\text{with} \quad &  \rho_{A} (t) = \int_U \sqrt{\det \grad \vP_t^T \grad \vP_t} \d \vx = |S_t| .
\end{align}
\end{subequations}
We note that both $\rho_0$ and $\rho$ are the area-elements corresponding to the parametrizations $\vP$ and $\vQ$ normalized to a probability density since
\begin{gather}
\rho_A(t) = \int_{\vX_B^{-1}(U)} \sqrt{\det \grad \vP_t^T \grad \vP_t}|_{\vX_B} \det \grad \vX_B \d \vx = \int_U \sqrt{\det \grad \vQ_t^T \grad \vQ_t} \d \vx.
\end{gather}

We have the following equations for their time evolution:
\begin{subequations} \label{eqGroup:densityEvolution}
\begin{align}
& \partial_t \rho_{A} (t) = \int_U \Div|_{S_t} \vv \sqrt{\det \grad \vP_t^T \grad \vP_t} \d \vx = \rho_{A} \int_U \rho_0 \Div|_{S_t} \vv \d \vx, \\
& \partial_t \rho_0(\vx, t) = \frac{1}{\rho_{A}} \Div|_{S_t} \vv \sqrt{\det \grad \vP_t^T \grad \vP_t} - \frac{\rho_0}{\rho_{A}} \partial_t \rho_{A} = \left(  \Div|_{S_t} \vv -  \partial_t \log \rho_{A}  \right) \rho_0(\vx, t)  , \\
& \rho(\vx, t) = \rho_0 \left(\vX_B(\vx, t), t \right) \det \grad \vX_B .
\end{align}
\end{subequations}

Using the results in section \ref{subsec:densityRedist}, we have that the pullback of $\rho_0$ by $\vX_B$ generates a diffusion process in $\rho$. Therefore, the governing equation for the area-element of the redistributed parametrization $\vQ_t$ is
\begin{gather} \label{eq:redistGovEqn}
\partial_t \rho(\vx, t) = \nu \Laplace \rho(\vx, t)  + \partial_t \rho_0 |_{\vX_B} \det \grad \vX_B = \nu \Laplace \rho(\vx, t) + \lambda_{\vv}(\vx, t) \rho (\vx, t) ,
\end{gather}
where $\lambda_{\vv} (\vx, t) = \partial_t \log \rho_0 |_{\vX_B} =  \left( \Div|_{S_t} \vv \right)_{(\vX_B)} -  \int_U \rho_0 \Div|_{S_t} \vv \d \vx $.
The diffusion coefficient $\nu$ is introduced in the redistribution step to increase control over the growth of area density from surface deformation.


The method is summarized in pseudocode \ref{alg:surfParam}. The notation $\vP_t$ implicitly assumes the definition and computation of the original time-dependent parametrization function \eqref{eq:evolveParam} using the numerical characteristic map $\vphi_F$ of the ambient advection. The computation of $\vphi_F$ is given in\eqref{eq:stepMapForw} and \eqref{eqGroup:submaps3D}. In comparison with the algorithm described in section \ref{sec:diffFlow}, the redefinition of $\rho_0^n$ at each $t_n$ at line 4 corresponds to a source term $\lambda_{\vv} \rho$ in the density evolution arising from the deformation of the surface. A typical energy argument for the reaction-diffusion equation provides some estimates on the evolution of the density. Here we use the special structure of the $\lambda_{\vv}$ term to write a more specific estimate.

\begin{algorithm}
  \caption{CM method for equiareal time-dependent surface parametrization
    \label{alg:surfParam}}
  \begin{algorithmic}[1]
    \Require{Parametrization $\vP_t$, staggered grids $\gG$ and $\gD$, diffusivity $\nu$, time step $\incr{t}$, final time $T$}
    \Function{Reparametrization}{$\vP_t$, $\nu$, $T$}
      \State Initialize $t \gets 0$ ,$\vhX_{[t, 0]} \gets {\bm {id}}$  \Comment{$\bm{id}$ is the identity map}
      \While{$t < T$}
        \Define{$\rho^n_0(\vx)$}{$\frac{1}{\rho_{A}(t)} \sqrt{\det \grad \vP_t^T \grad \vP_t}$} \Comment{from \eqref{eqGroup:defDensities}}
      	\Let{$\vhX_{[t+\incr{t}, t]}$}{HeatFlowMap$(\rho^n_0, [0, \nu \incr{t}])$} \Comment{subroutine \ref{alg:heat} on local-time problem}
      	\Let{$\vhX_{[t+\incr{t}, 0]}$}{$\He_\gG \left[ \vhX_{[t, 0]} \circ   \vhX_{[t +\incr{t}, t]} \right]$}
      	\Let{$t$}{$t+\incr{t}$}
      \EndWhile
      \State \Return{$\vQ_T = \vP_T \circ \vhX_{[T, 0]}$}
    \EndFunction
  \end{algorithmic}
\end{algorithm}

\begin{remark}
The diffusion coefficient $\nu$ is implemented in algorithm \ref{alg:surfParam} as a scaling on the diffusion time for the local time computations, by evolving the diffusion for a total time of $\nu \incr{t}$. The time steps used for the computation of the subroutine \ref{alg:heat} can be adjusted independently of the $\incr{t}$ in algorithm \ref{alg:surfParam} according on the given density $\rho^n_0$. In practice, the same $\incr{t}$ is used in both routines.
\end{remark}

\begin{theorem} \label{thm:areaErrorBound}
Choosing $\nu$ large enough, the $L^2$ distance between the area-element of the $\vQ$ parametrization and the uniform distribution can be controlled to order $\bigO(\nu^{-1})$.
\end{theorem}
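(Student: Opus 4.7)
The plan is to run a standard $L^2$ energy argument on the reaction-diffusion equation \eqref{eq:redistGovEqn} governing the normalized area-element $\rho$. I expect the enhanced diffusion $\nu \Laplace \rho$ to drive exponential relaxation toward the uniform state at rate proportional to $\nu$, while the reaction $\lambda_{\vv} \rho$ injects only $O(1)$ energy per unit time. The balance between these two effects should leave an equilibrium $L^2$ error of order $\nu^{-1}$.

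First, I would differentiate the energy $E(t) = \tfrac{1}{2} \int_U (\rho - 1)^2 \d \vx$ from \eqref{eq:defEnergy} using \eqref{eq:redistGovEqn}. Integration by parts, justified by the Neumann or periodic boundary condition inherited from \eqref{eqGroup:contHeatEqn}, gives
\begin{gather*}
\frac{d}{dt} E(t) = -\nu \int_U |\grad \rho|^2 \d \vx + \int_U \lambda_{\vv} \rho (\rho - 1) \d \vx .
\end{gather*}
Here I used that $\rho$ stays a probability density, so integrating \eqref{eq:redistGovEqn} over $U$ yields $\int_U \lambda_{\vv} \rho \d \vx = 0$, which collapses $\int_U \lambda_{\vv} \rho^2 \d \vx$ to $\int_U \lambda_{\vv} \rho (\rho - 1) \d \vx$. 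Applying the Poincaré-Wirtinger inequality to the dissipation (since $\rho - 1$ has zero mean) and Cauchy-Schwarz to the reaction then produces
\begin{gather*}
\frac{d}{dt} E \leq -2 \nu C_P^{-2} E + M \sqrt{2 E}, \qquad M \defeq \sup_t \| \lambda_{\vv}(\cdot, t) \rho(\cdot, t) \|_{L^2} .
\end{gather*}

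Next, Young's inequality in the form $M \sqrt{2 E} \leq \nu C_P^{-2} E + \tfrac{M^2 C_P^2}{2 \nu}$ reduces the estimate to the scalar ODE inequality
\begin{gather*}
\frac{d}{dt} E \leq -\nu C_P^{-2} E + \frac{M^2 C_P^2}{2 \nu} .
\end{gather*}
Grönwall's lemma then gives $E(t) \leq e^{-\nu C_P^{-2} t} E(0) + \tfrac{M^2 C_P^4}{2 \nu^2}$, so once the transient has decayed, $\| \rho(\cdot, t) - 1 \|_{L^2} = \sqrt{2 E(t)} = \bigO (\nu^{-1})$ for $\nu$ large, which is the claim.

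The main obstacle is securing a uniform-in-time bound on the constant $M$. Boundedness of $\lambda_{\vv} = (\Div|_{S_t} \vv)_{\vX_B} - \int_U \rho_0 \Div|_{S_t} \vv \d \vx$ in $L^\infty$ follows from the smoothness of $\vv$ and the regularity of the parametrization $\vP_t$. A uniform $L^\infty$ bound on $\rho$ itself can be extracted from the pullback representation $\rho = \rho_0|_{\vX_B} \det \grad \vX_B$ in \eqref{eqGroup:densityEvolution}: arguing as in the proof of Theorem \ref{thm:distConv}, $\| \rho \|_{L^\infty}$ is controlled by $\sup \rho_0 / \inf \rho_0$ up to a factor bounded by the area distortion of $\vP_t$, which remains finite as long as $\vP_t$ stays a regular parametrization on the time horizon of interest. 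Once $M$ is under control, the remaining energy-inequality machinery is routine.
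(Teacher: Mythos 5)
Your overall strategy is the paper's: an $L^2$ energy estimate on the reaction--diffusion equation \eqref{eq:redistGovEqn}, the Poincar\'e--Wirtinger inequality to extract a spectral gap proportional to $\nu$, and a Gr\"onwall balance between the $\bigO(\nu)$ decay and the source injected by the surface deformation. The algebra you carry out (the identity $\int_U \lambda_{\vv}\rho \d \vx = 0$ from conservation of total mass, the integration by parts, Young's inequality) is sound. The one step that does not hold up as written is the uniform-in-time control of $M = \sup_t \|\lambda_{\vv}\rho\|_{L^2}$, which your estimate genuinely needs. You propose to bound $\|\rho\|_{L^\infty}$ through the pullback representation $\rho = \rho_0|_{\vX_B}\det\grad\vX_B$, ``arguing as in the proof of Theorem \ref{thm:distConv}''; but that argument is circular here, since $\det\grad\vX_B$ is precisely $\rho/\rho_0|_{\vX_B}$, and the maximum-principle bound on the Jacobian used in Theorem \ref{thm:distConv} relies on the pure heat equation. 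With the source $\lambda_{\vv}\rho$ present, the same reasoning only yields a bound growing like $\exp\left(\int_0^t\|\lambda_{\vv}\|_\infty \d s\right)$, so $M$ is controlled only on a fixed finite horizon, with a constant that is not uniform in $t$.

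The gap is easily repaired, and the repair is exactly what the paper does: split $\rho = (\rho-1)+1$ inside the reaction term, so that
\begin{gather*}
\int_U \lambda_{\vv}\,\rho\,(\rho-1)\d \vx = \int_U \lambda_{\vv}(\rho-1)^2\d \vx + \int_U \lambda_{\vv}(\rho-1)\d \vx \leq \|\lambda_{\vv}\|_\infty\|\rho-1\|_{L^2}^2 - \int_U \lambda_{\vv}\d \vx ,
\end{gather*}
and then use the change-of-variables identity $\int_U \rho_0 \,\Div|_{S_t}\vv \d \vx = \int_U \rho \left(\Div|_{S_t}\vv\right)_{(\vX_B)}\d \vx$ to rewrite the persistent source as $-\int_U \lambda_{\vv}\d \vx = \int_U (\rho-1)\left(\Div|_{S_t}\vv\right)_{(\vX_B)}\d \vx \leq \beta \|\rho-1\|_{L^2}$, where $\beta$ depends only on $\grad \vv$ and the surface. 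The quadratic piece merely shifts the decay rate to $\alpha\nu - \|\lambda_{\vv}\|_\infty$ (whence the paper's condition $\nu > \alpha^{-1}\|\lambda_{\vv}\|_\infty$), the linear piece plays the role of your $M\sqrt{2E}$ term, and every constant is now controlled by the ambient velocity alone, with no a priori bound on $\rho$ required. With that substitution your Gr\"onwall step goes through essentially verbatim and yields the stated $\bigO(\nu^{-1})$ bound uniformly in time.
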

\begin{proof}
The algorithm is consistent with the reaction-diffusion equation \eqref{eq:redistGovEqn}. We have that the energy estimate in this case is
\begin{gather}
\frac{d}{dt} E(t) = - \nu \|\grad (\rho -1) \|_{L^2}^2  + \int_U  (\rho -1)^2 \lambda_{\vv} \d \vx + \int_U (\rho -1) \lambda_{\vv} \d \vx \nonumber \\
\leq - \nu \|\grad (\rho -1) \|_{L^2}^2 + \| \lambda_{\vv} \|_\infty \| \rho -1 \|_{L^2}^2 - \int_U \lambda_{\vv} \d \vx ,
\end{gather}
where $\int_U \rho \lambda_{\vv} \d \vx$ vanishes since $\int_U \partial_t \rho \d \vx = 0$ by construction and $\int_U \Laplace \rho \d \vx =0$ with periodic or Neumann boundary conditions.

The integral $\int_U \lambda_{\vv} \d \vx$ acts as a ``persistent'' source term in the energy decay since it corresponds to the density changes the surface deformation applies to the area density independently of its $\rho-1$ deviation from uniform. We have the following bound on this integral:
\begin{gather}
- \int_U \lambda_{\vv} \d \vx = - \int_U \left( \Div|_{S_t} \vv \right)_{(\vX_B)} \d \vx +  \int_U \rho \left( \Div|_{S_t} \vv \right)_{(\vX_B)} \d \vx \\
 = \int_U (\rho -1 )\left( \Div|_{S_t} \vv \right)_{(\vX_B)} \d \vx  \leq \left\|\left( \Div|_{S_t} \vv \right)_{(\vX_B)} \right\|_{L^2} \| \rho - 1 \|_{L^2}  .  \nonumber
\end{gather}

We obtain the following bound on the growth of the $L^2$ norm:
\begin{gather}
\frac{d}{dt} \| \rho - 1\|_{L^2} \leq 2 (- \alpha \nu + \| \lambda_{\vv} \|_\infty  )  \| \rho - 1\|_{L^2} + 2 \left\|\left( \Div|_{S_t} \vv \right)_{(\vX_B)} \right\|_{L^2}  ,
\end{gather}
meaning that the $L^2$ norm can be controlled by
\begin{gather}
\| \rho - 1\|_{L^2} \leq \frac{\beta}{\alpha \nu - \| \lambda_{\vv} \|_\infty} + c \exp \left( -2t \left( \alpha \nu - \| \lambda_{\vv} \|_\infty \right) \right) ,
\end{gather}
where $\beta = \max_t \left\|\left( \Div|_{S_t} \vv \right)_{(\vX_B)} \right\|_{L^2}$ and $\alpha$ the constant from the Poincar\'e inequality.

Therefore, by choosing $\nu > \alpha^{-1} \| \lambda_{\vv} \|_\infty$ sufficiently large, we can guarantee that the $L^2$ norm of the deviation from uniform of the $\vQ$ area-element stays of order $\bigO (\nu^{-1} )$ for all times.
\end{proof}

According to the governing equation \eqref{eq:redistGovEqn}, for larger enough $\nu$, the diffusion should limit the fine scale spatial features present in $\rho$ and hence in the velocity field $-\grad \log \rho$. High frequency modes are generated by $\lambda_\vv$ and hence it is sufficient to compute the local-time deformation map on a grid fine enough to resolve the source term. Similar to the advection problem where the local maps $\vphi_{[\tau_{i-1}, \tau_{i}]}$ are computed on coarse grids which resolve well enough the local-time velocity field, the submaps in the density transport map can also benefit from the computational savings of coarser grids. All characteristic maps involved share the same semigroup structure and can be decomposed into submaps in order to achieve higher spatial resolution at low computational cost. We can therefore apply the above reparametrization algorithm to each subinterval in the submap decomposition \eqref{eqGroup:submaps3D} and obtain the full reparametrization as the composition of all redistribution maps. Combining \eqref{eqGroup:submaps3D}, \eqref{eq:evolveParam} and \eqref{eq:SubmapDecomp}, we have
\begin{gather} \label{eq:movedParamNum}
\vQ_t = \vphi_{[\tau_{n-1}, t]} \circ \vphi_{[\tau_{n-2}, \tau_{n-1}]} \circ \cdots \circ \vphi_{[0, \tau_1]} \circ \vP_0 \circ \vhX_{[\tau_1, 0]} \circ \cdot \circ \vhX_{[\tau_{n-1}, \tau_{n-2}]} \circ \vhX_{[t, \tau_{n-1}]} .
\end{gather}
Numerically, each submap is computed independently, sequentially using algorithm \ref{alg:surfParam}. To be consistent with the reinitialized problem, the input parametrization for the $i+1^{st}$ map is defined to be $\vphi_{[\tau_i, t]} \circ \vQ_{\tau_i}$.

\begin{remark}
For the submap computations, the parametrization of the surface $S_t$ is given by $\vphi_{[\tau_i, t]} \circ \vQ_{\tau_i}$ where $\vQ_{\tau_i}$ is given by \eqref{eq:movedParamNum}. However, to save computational time, for the purpose of computing the density $\rho$, it is sufficient to replace $\vQ_{\tau_i}$ by an interpolant on a fine enough grid similar to the approach in \cite{CM}. This is because the method maintains the sampling density of $\vQ$ near uniform for all times and therefore $\vQ_{\tau_i}$ can be accurately represented by interpolation.
\end{remark}

\section{Numerical Results} \label{sec:numResults}

\subsection{Density Redistribution on Flat Domains}
In this section, we test the redistribution algorithm on an evolving probability density $\rho_0(\vx, t)$ in a flat periodic domain $U$. A redistributed density $\rho$ is obtained from a redistribution map $\vX_B$ computed using algorithm \ref{alg:surfParam} where $\rho_0$ evaluated at line 4 is instead assumed to be given. The probability density $\rho(\vx, t) = \rho_0 (\vX_B(\vx, t), t)$ would then evolve according to a heat equation with source term
\begin{gather}
\partial_t \rho = \nu \Laplace \rho + \rho \, \partial_t \log \rho_0|_{\vX_B}  .
\end{gather}

We define the following density $\rho_0$:
\begin{gather}
\rho_0(x,y,t) = 1 + 0.25 \sin \left( 1.5 \pi t  \right) \eta \left( x + 0.25 \sin \left( 0.5 \pi t  \right) \sin (2 \pi y), y \right) ,
\end{gather}
where $\eta$ is defined in \eqref{eq:defDensVar}. 

The resulting density $\rho_0$ is initially concentrated around a circle of radius $0.25$ and is advected by the volume preserving transformation $(x,y) \mapsto (x + 0.25 \sin \left( 0.5 \pi t  \right) \sin (2 \pi y), y)$. The amplitude of the density is scaled by $ 0.25 \sin \left( 1.5 \pi t  \right)$.

We test the redistribution of this moving density in the time interval $t \in [0, 3]$ with various diffusion coefficients $\nu$ and with various grid sizes for the characteristic map which we represent using piecewise linear interpolation. We ran convergence tests of the maximum $L^2$ error for $t \in [0, 3]$ with respect to the grid size and $\nu$. The results are shown in figure \ref{fig:Conv}. As expected, the maximum $L^2$ norm of $\rho - 1$ is linear with respect to $\incr{x}$, $\incr{t}$ and $\nu^{-1}$.

\begin{figure}[h]
\centering
\begin{subfigure}{0.45\linewidth}
\centering
\includegraphics[width = 0.6\linewidth]{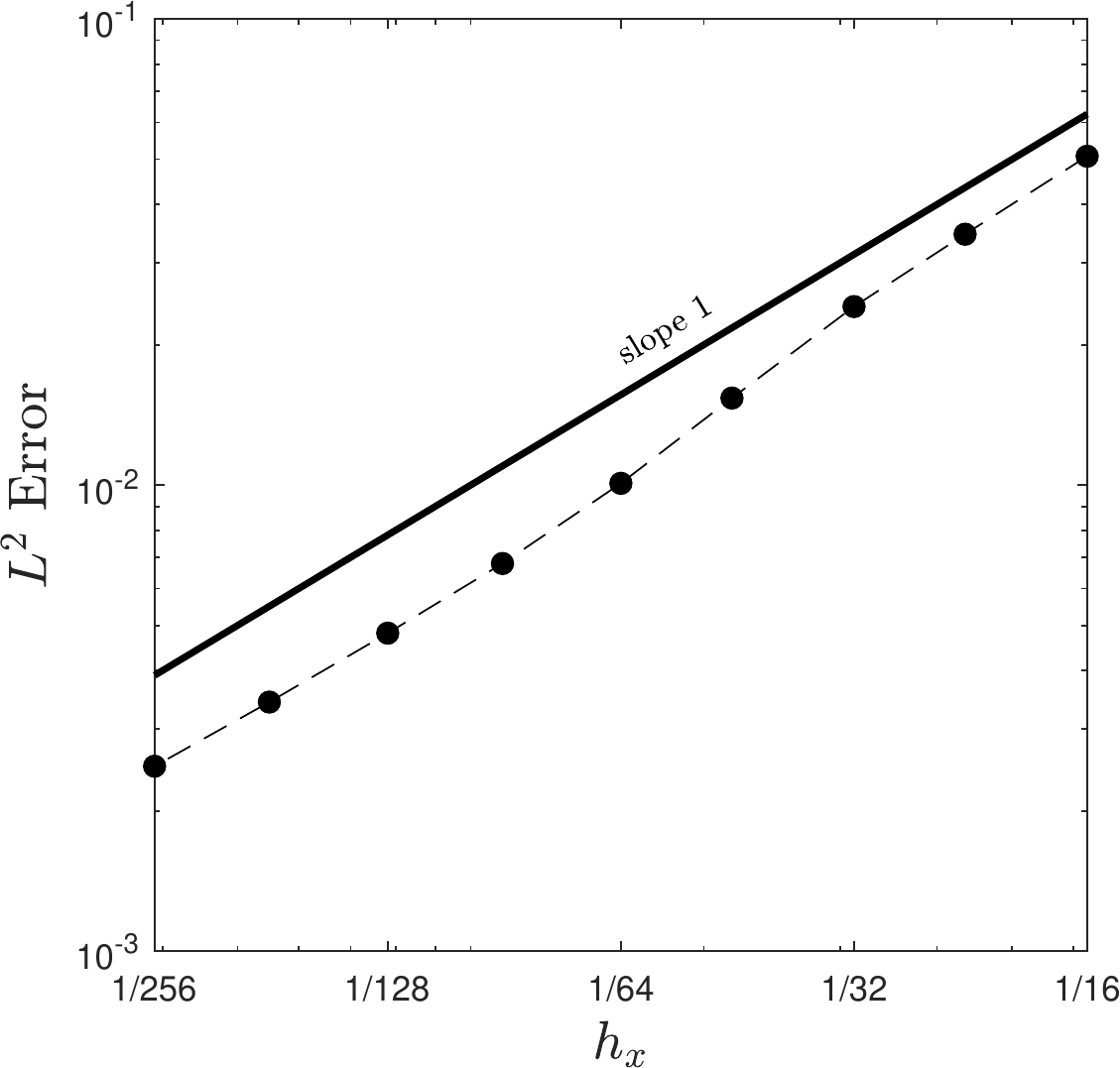}
\caption{$\nu = 10$, $\incr{t} = \incr{x}/4$.}
\label{subfig:gridConv}
\end{subfigure}
\begin{subfigure}{0.45\linewidth}
\centering
\includegraphics[width = 0.6\linewidth]{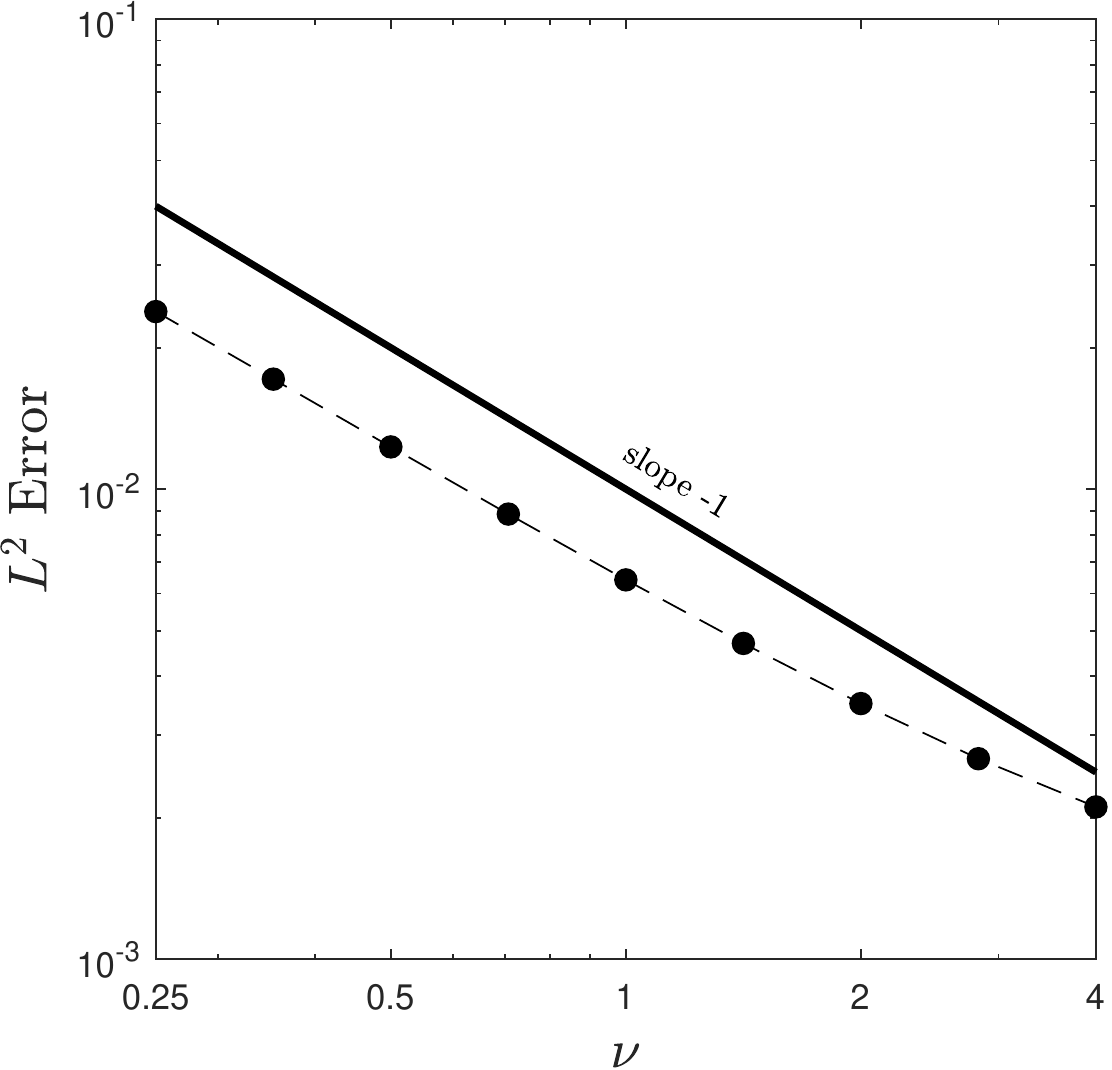}
\caption{$\incr{x} = 1/768$, $\incr{t} = 1/2048$.}
\label{subfig:nuConv}
\end{subfigure}
\caption{$L^2$ error with respect to $\incr{{ }}$ and $\nu$.}
\label{fig:Conv}
\end{figure}

\subsection{Equidistributing Parametrizations of Curves and Surfaces} \label{subsec:equidistCS}
In this section, we apply the redistribution algorithm to the evolution of several curves and surfaces in a 3D ambient flow. Starting from a given initial parametrization, we use the redistribution characteristic map to maintain an equiareal parametrization of the evolving curves and surfaces at all times during the simulations. For the tests in this section, we use the following 3D ambient velocity taken from \cite{leveque1996high}:
\begin{gather} \label{eq:3Dambient}
\vv(\vx, t) = \left( \begin{matrix}
2\cos\left(\frac{\pi t}{P}\right) \left( \sin(\pi x) \right)^2 \sin(2 \pi y) \sin(2 \pi z)\\
-\cos\left(\frac{\pi t}{P}\right) \sin(2 \pi x) \left( \sin(\pi y) \right)^2 \sin(2 \pi z)\\
-\cos\left(\frac{\pi t}{P}\right) \sin(2 \pi x) \sin(2 \pi y) \left( \sin(\pi z) \right)^2
\end{matrix}  \right) 
\end{gather}
defined on a periodic cube $[0,1]^3$, with $\vx = (x,y,z)^T$. $P$ is the period of the velocity field, the deformation it generates reaches its maximum extent at $t= P/2$ then returns to identity at $t=P$. For the following tests, we will choose $P=3$.

The velocity field also has reflection symmetries across the planes $z = y$ and $z = 1-y$, the flow is also planar along these two planes. We know therefore that the flow will not cross these planes and will have mirror symmetric motion on either sides. It follows that any initial curve or surfaces crossing these planes will undergo extensive deformation. Without any maintenance on the parametrization function, one can expect the resulting arclength and area elements to grow exponentially, resulting in poor representation of the curves and surfaces.

For all results in this section, we used a $64^3$ grid with $\incr{t} = 1/96$ for the computation of the forward characteristic map in the 3-dimensional ambient space, we used a $128^2$ grid for the computation of the 1D and 2D redistribution maps on the parametric space. The ambient map uses Hermite cubic interpolation and redistribution maps use linear interpolation. The diffusion coefficient for all redistribution maps were fixed at $\nu = 2$. All characteristic maps use the submap decomposition method and all curves and surfaces are pushed forward using the same forward ambient space map. The computations were carried out on a laptop with an Intel i5-3210 duo-core 2.50 GHz CPU with 8 GB of RAM. The routines are implemented in Matlab with C-Mex subroutines for the interpolation operations. As reference, the 3D ambient characteristic maps was decomposed into 6 submaps in the interval $[0, 1.5]$, each map being stored on hard drive. The total computational time for calculating the ambient characteristic map was 511 seconds.

\subsubsection{Evolution of Curves} \label{subsec:evoCurves}

We apply the redistribution method in 1-dimensional parametric space to maintain an arclength parametrization of curves evolving under the flow given in \eqref{eq:3Dambient}. We use 4 curves in this test, the first three are line segments and the last one is a circle. 

To illustrate the effect of the redistribution, we show in figure \ref{fig:Creso} the final states of the curves at various resolutions (the initial curves and their time evolution are shown in Appendix \ref{sec:appendixCurves}). In figure \ref{fig:Creso}, the parametrization $\vP$ as well as its redistributed version $\vQ$ are drawn using a piecewise linear interpolation on a gradually refined grid from $32$ equidistant grid points to $1024$ grid points. As we can see, in all cases, to capture the features of a given curve, the $\vP$ parametrization requires a roughly $10\times$ finer grid to obtain the same quality as the $\vQ$ parametrization. This is due to the high variations in the speed of the $\vP$ parametrization. Indeed, due to the large distortions created by the ambient flow, some regions of the curves undergo large stretching whereas others are compressed. At $t=1.5$, this results in high amplitude variations in the distances between marker points that were equidistant at $t=0$. This effect can be clearly seen in figure \ref{fig:lengthDist} where the histogram of the cell lengths for the curves are shown. As we can see, the cell lengths of the original parametrization $\vP$ (shown in blue) are rather spread out, with a majority of very short cells covering one part of the curves and few very large cells covering the rest. In terms of sampling, this is suboptimal since the marker points in the oversampled regions are redundant. In contrast, the redistributed parametrizations $\vQ$ (shown in red/orange) have a much more uniform distribution of cell lengths: almost all cell lengths are concentrated around the average, meaning that marker points are uniformly distributed along the curve. We can also measure the uniformity of the marker points distribution quantitatively: table \ref{tab:Cstats} shows the standard deviation, i.e. $\| \rho^n  - 1 \|_{L^2}$, and the median of the normalized area densities for each parametrization as well as the computational time required for generating the redistribution map. Ideally, for a perfectly uniform distribution, the median cell length should be 1 and the standard deviation 0. A median closer to 1 and a smaller standard deviation in the $\vQ$ case indicates that the arclength distance between two sample points are more uniform thereby avoiding the undersampling of the more deformed parts of the curves or redundancy of markers in compressed parts.

\begin{figure}[h]
\centering
\begin{subfigure}{0.4\linewidth}
\centering
\includegraphics[width = 0.8\linewidth]{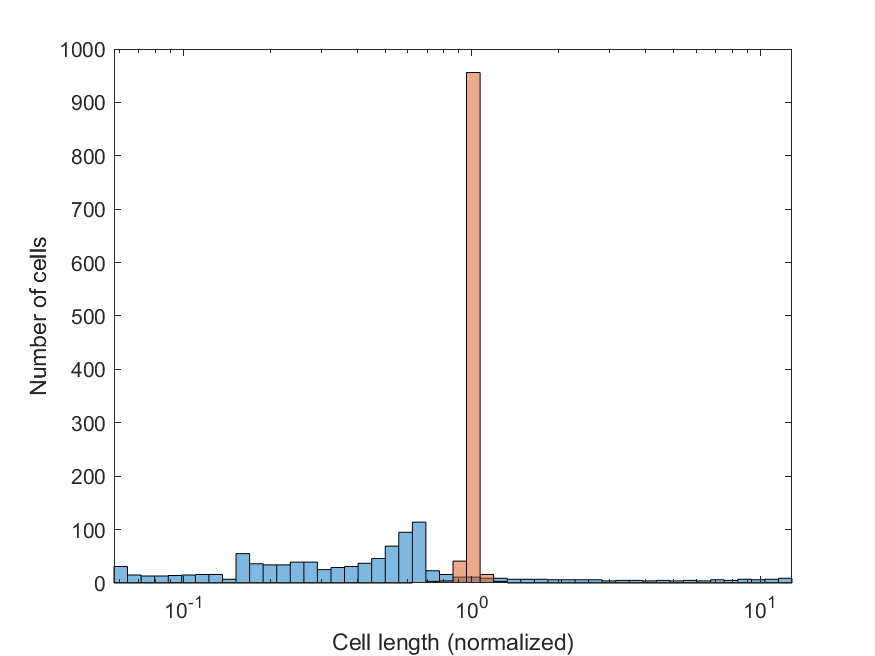}
\caption{Curve 1}
\label{subfig:C1_lendist}
\end{subfigure}
\begin{subfigure}{0.4\linewidth}
\centering
\includegraphics[width = 0.8\linewidth]{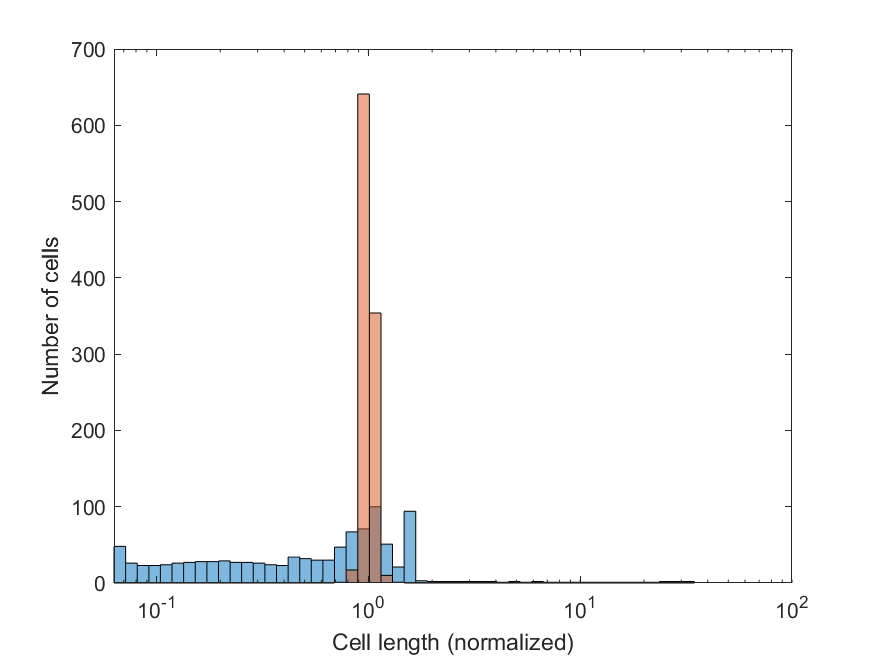}
\caption{Curve 2}
\label{subfig:C2_lendist}
\end{subfigure}
\begin{subfigure}{0.4\linewidth}
\centering
\includegraphics[width = 0.8\linewidth]{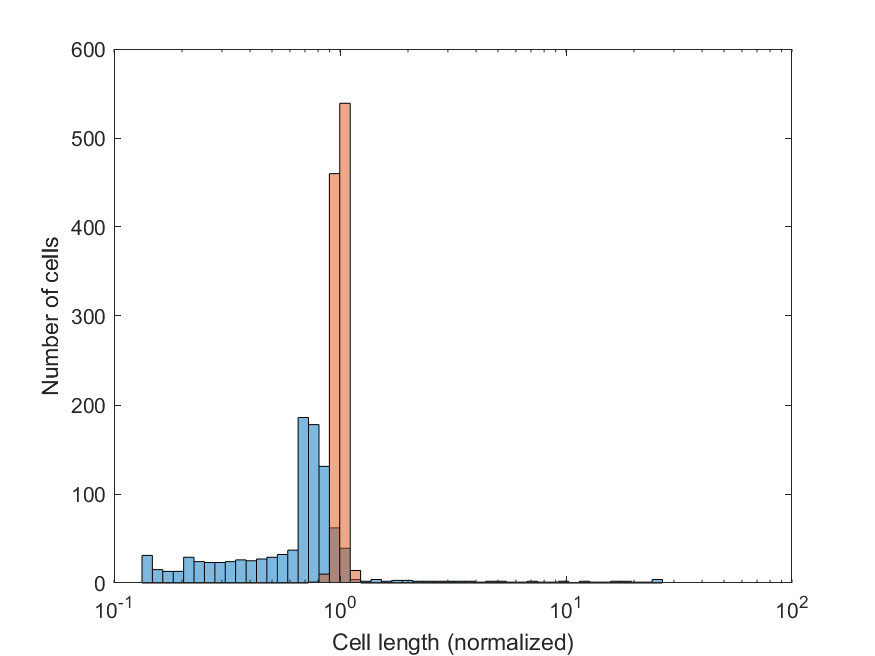}
\caption{Curve 3}
\label{subfig:C3_lendist}
\end{subfigure}
\begin{subfigure}{0.4\linewidth}
\centering
\includegraphics[width = 0.8\linewidth]{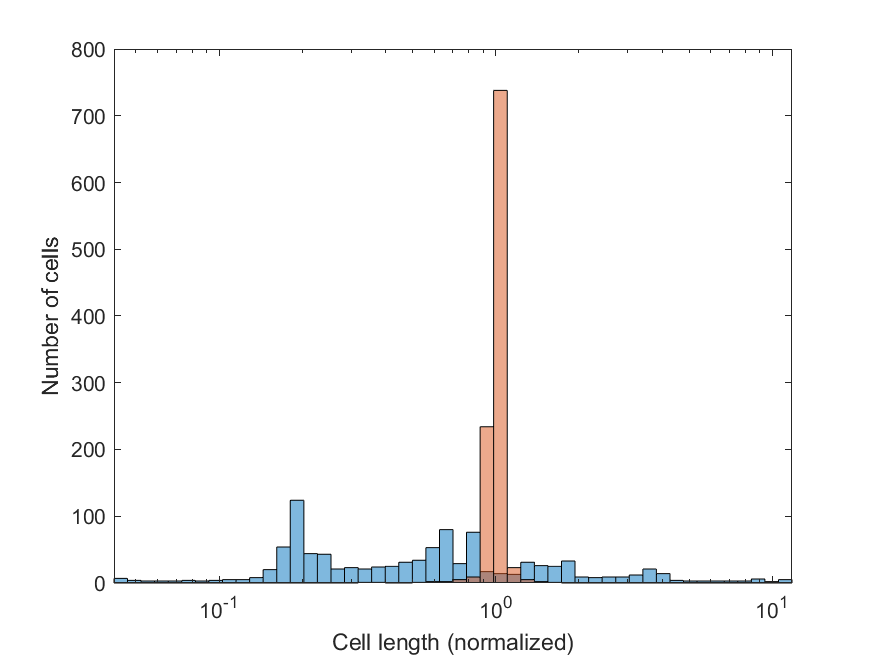}
\caption{Curve 4}
\label{subfig:C4_lendist}
\end{subfigure}
\caption{Arclength distribution of the parametrizations $\vP$ (blue) and $\vQ$ (red)}
\label{fig:lengthDist}
\end{figure}

\begin{figure}[H]
\centering
\includegraphics[width = \linewidth]{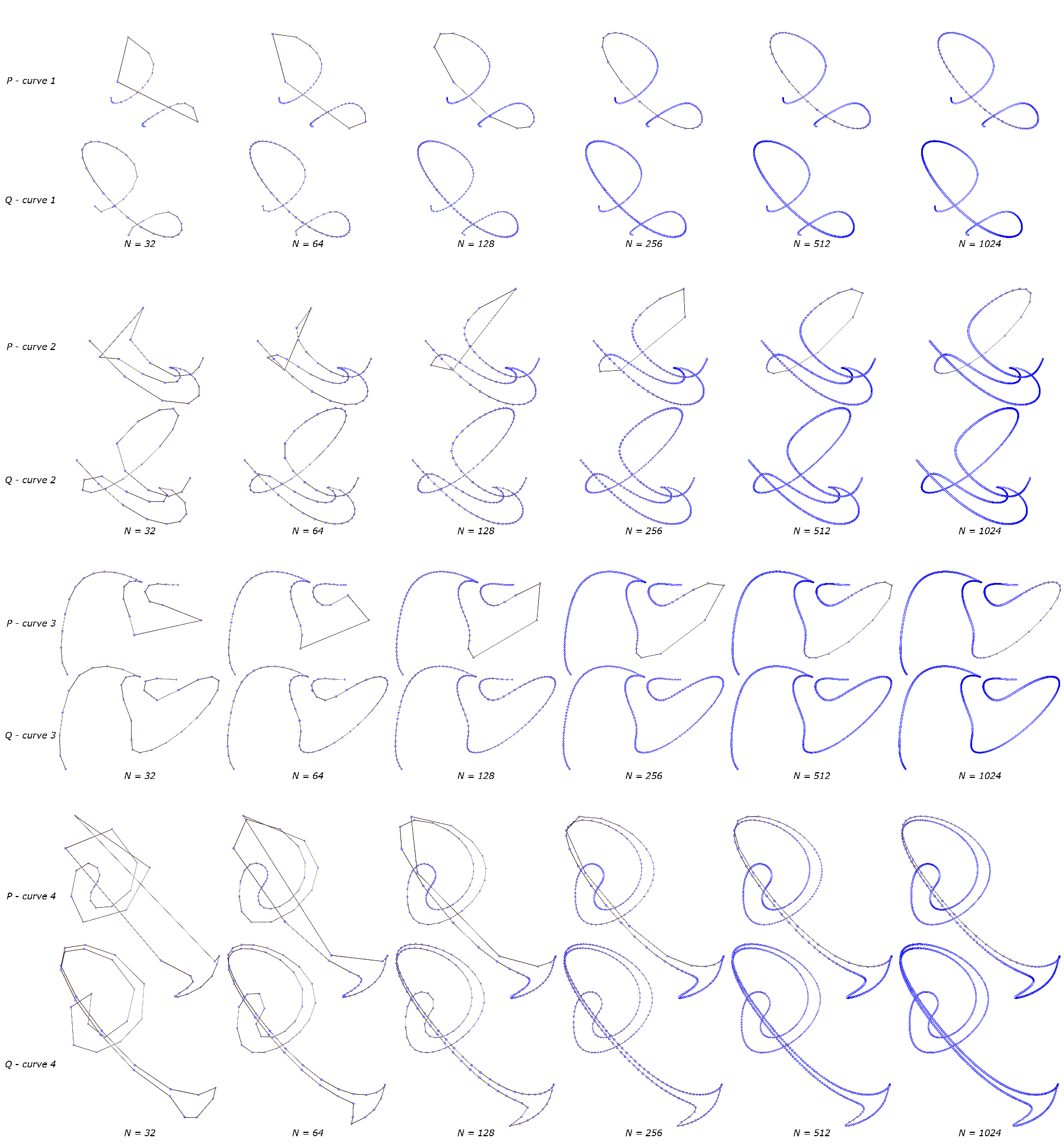}
\caption{$\vP$ and $\vQ$ parametrizations of curves 1 to 4 using gradually finer grids.}
\label{fig:Creso}
\end{figure}

\begin{table}[H] \footnotesize
\begin{subtable}{\linewidth}
\begin{center}
\begin{tabular}{c | c c c c}
\hline
\hline
Curves & 1 & 2 & 3 & 4 \\
\hline
$\sigma_{\vP}$ & 1.0479 & 0.8214 & 0.7088 & 0.8858 \\
$\sigma_{\vQ}$  & 0.0195 & 0.0318 & 0.0248 & 0.0353 \\
\hline
$M_{\vP}-1$  & -0.5358 & -0.3742 & -0.2927 & -0.4435 \\
$M_{\vQ}-1$  & 0.0013 & 0.0012 & -0.0002 &  0.0005 \\
\hline
\end{tabular}
\end{center}
\captionsetup{width=.95\linewidth}
\caption{Standard deviation ($\sigma$) and median ($M$) errors of the length density.}
\label{subtab:Cerrors}
\end{subtable}
\newline
\vspace*{16pt}
\newline
\begin{subtable}{\linewidth}
\begin{center}
\begin{tabular}{c | c c c c}
\hline
\hline
Curves & 1 & 2 & 3 & 4 \\
\hline
Evaluating $\vQ$ & 9.96 s & 9.32 s & 9.61 s & 9.62 s \\
Defining $\rho^n(\vx)$ & 0.04 s & 0.04 s & 0.05 s  & 0.04 s \\
Updating $\vhX_B$ & 0.74 s & 0.70 s & 0.67 s & 0.59 s \\
Number of remappings & 2 & 3 & 3 & 3 \\
\hline
\end{tabular}
\end{center}
\captionsetup{width=.95\linewidth}
\caption{Total computation times for the evolution of the parametrization $\vQ$.}
\label{subtab:Ctimes}
\end{subtable}
\caption{Parametrizations $\vP$ and $\vQ$ at $t=1.5$.}
\label{tab:Cstats}
\end{table}

\subsubsection{Evolution of Surfaces}
We test the redistribution method on three different topologies for 2-dimensional surfaces: rectangle, torus and cylinder. The parametric spaces are taken to be $U = [0,1]^2$ with  Neumann, periodic and mixed Neumann-periodic boundary conditions respectively. These surfaces will move under the flow \eqref{eq:3Dambient} and we will compute the two parametrizations $\vP$ and $\vQ$ as in the 1D case. For each given time $t$ shown in figures \ref{fig:S1Evolve} to \ref{fig:S3Evolve}, the parametrizations are represented by a linear interpolation on a uniform mesh grid of $512^2$ points. For $\vP$ the grid data is obtained by solving the ODEs forward in time for each grid point, $\vQ$ is obtained by evaluating the redistribution map, $\vP_0$ and the forward ambient characteristic maps at grid points on $U$. In order to illustrate the effect of the redistribution, we sample each parametrization with 200 000 randomly generated marker points. The distributions of these random points over the surfaces are expected to follow the random variables description in section \ref{subsec:densityRedist}. The initial surfaces with marker points are shown in \ref{fig:InitSurfs}, the time evolution of each surface is shown in figures \ref{fig:S1Evolve} to \ref{fig:S3Evolve}. We also show the standard deviation and median of the normalized area densities and the computational times for the redistribution maps in figure \ref{fig:areaDist} and table \ref{tab:Sstats}.

\begin{figure}[h]
\centering
\begin{subfigure}{0.15\linewidth}
\includegraphics[width = \linewidth]{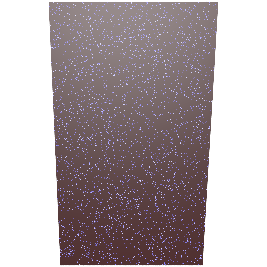}
\caption{Rectangle}
\label{subfig:S1init}
\end{subfigure}
\begin{subfigure}{0.15\linewidth}
\includegraphics[width = \linewidth]{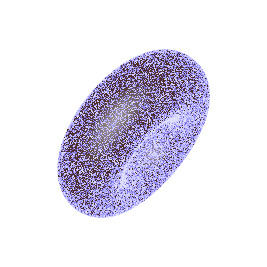}
\caption{Torus}
\label{subfig:S2init}
\end{subfigure}
\begin{subfigure}{0.15\linewidth}
\includegraphics[width = \linewidth]{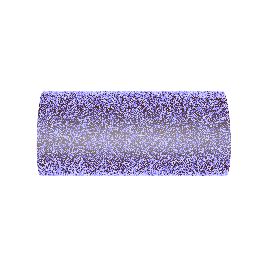}
\caption{Cylinder}
\label{subfig:S3init}
\end{subfigure}
\caption{Initial surfaces with uniformly distributed random sample points.}
\label{fig:InitSurfs}
\end{figure}

\begin{figure}[h]
\centering
\includegraphics[width = 0.85\linewidth]{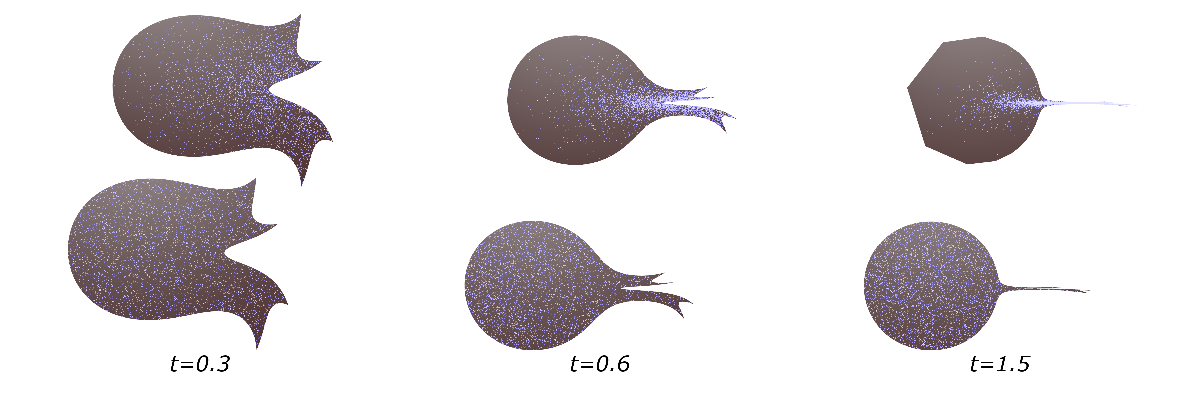} \\ 
\caption{Comparison of the surface sampling of the evolved rectangle using the original parametrization $\vP$ (top images) and the redistributed parametrization $\vQ$ (bottom images).}
\label{fig:S1Evolve}
\end{figure}

\begin{figure}[h]
\centering
\includegraphics[width = 0.85\linewidth]{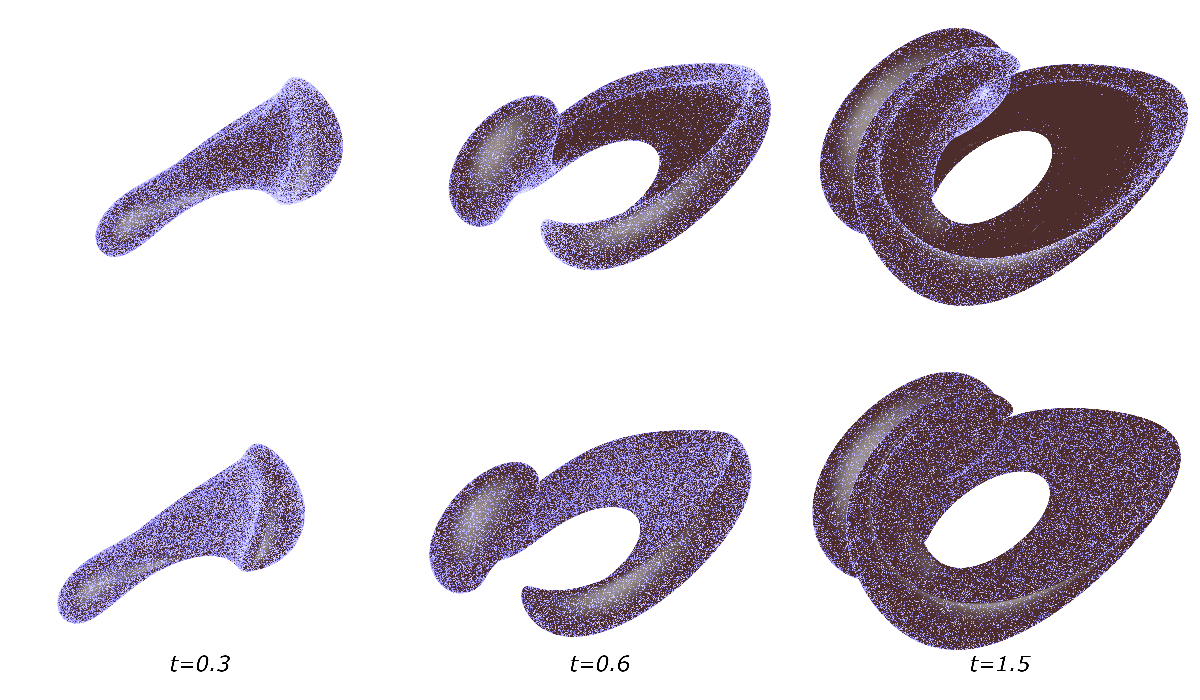} \\ 
\caption{Comparison of the surface sampling of the evolved torus using the original parametrization $\vP$ (top images) and the redistributed parametrization $\vQ$ (bottom images).}
\label{fig:S2Evolve}
\end{figure}

\begin{figure}[h]
\centering
\includegraphics[width = 0.85\linewidth]{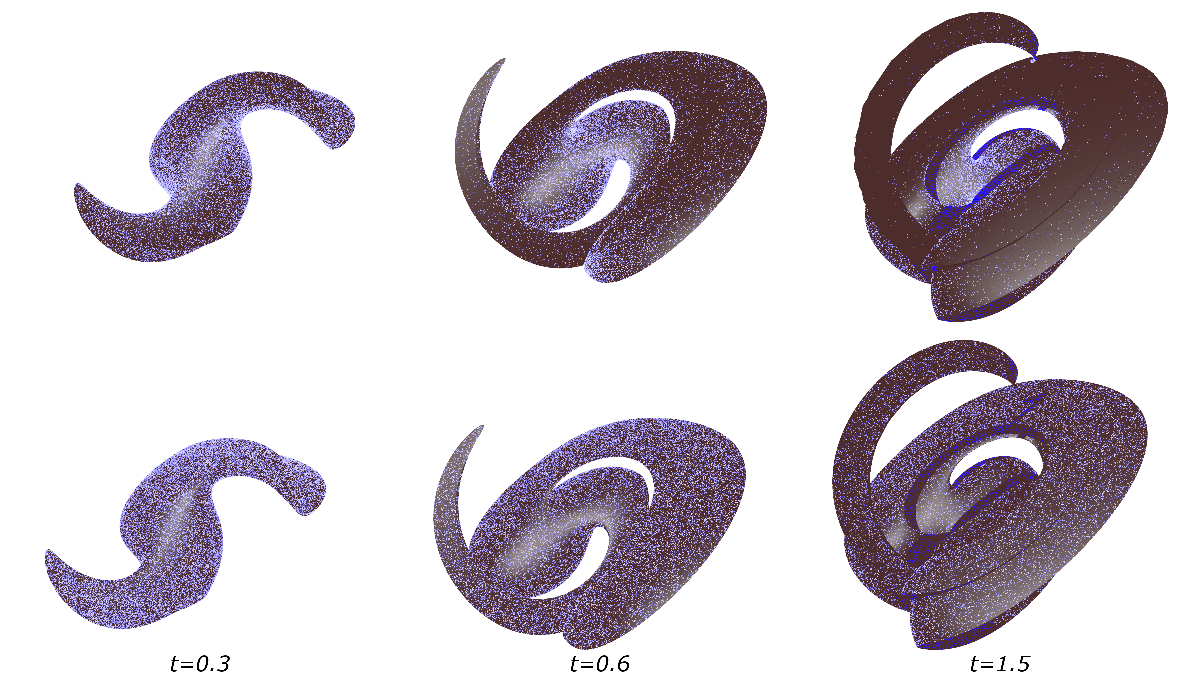} \\ 
\caption{Comparison of the surface sampling of the evolved cylinder using the original parametrization $\vP$ (top images) and the redistributed parametrization $\vQ$ (bottom images).}
\label{fig:S3Evolve}
\end{figure}

Figure \ref{fig:S1Evolve} shows a clear example of the benefits of the reparametrization method. In this case, the initial rectangle is placed on one of the symmetry planes of the flow, hence the deformation is applied fully on the tangential directions of the surface. At $t = 1.5$, we see from the $\vP$ parametrization that almost the entirety of the surface is compressed in a thin sliver. The sample points distribution is unnecessarily dense on the thin protrusion and very sparse on the rest of the surface. In fact, using a $512^2$ grid for the interpolation of $\vP$, we still see that the boundary of the surface is jagged and visibly piecewise-linear as opposed to the smooth circular shape shown by the same resolution interpolation of $\vQ$. Indeed, each line segment on the boundary corresponds to a cell edge of length $\bigO (1/512)$ at time $0$. This is an indication that the large scale deformation the surface has undergone between $t=0$ and $1.5$ makes the $\vP$ parametrization inefficient; the numerical difficulties from this deformation are mitigated by the redistribution method resulting in a reparametrization $\vQ$ which provides an uniform sampling and a smooth, well-resolved surface interpolation. Similar observations can be made in figure \ref{fig:S2Evolve} where, at $t=1.5$, a region of the torus is essentially not sampled under the $\vP$ parametrization. The clear demarcation line between the sampled and empty regions is in fact produced by the perspective of the view angle on the hole of the torus after the flow deformation. Hence, the $\vP$ sampling indicates that the marker points failed to represent a handle of the genus-1 surface; if a ``pure'' particles method were used, this can cause errors in the topology of the evolved shapes. For the evolution of the cylinder in figure \ref{fig:S3Evolve}, we also see the above issues in the $\vP$ parametrization. At $t=1.5$, the ``top face'' of the surface, which consists of two diametrically opposite portions of the cylinder that were brought close together by the flow, is poorly sampled by $\vP$. Without some underlying interpolation of the parametrization, such undersampling could fail to indicate the presence of two distinct pieces of surface. Furthermore, the protruding arc-like part also exhibit poor resolution of the boundary. The piecewise linear interpolation of $\vP$ is jagged at the boundary, which indicates that $\vP$ is not smooth enough (in the sense of the growth in magnitude of the higher derivatives) to be accurately represented on a $512^2$ grid. With the redistributed parametrization, the marker points generated from $\vQ$ are uniform and the interpolated surface, smooth.

We quantify the effect of the redistribution by plotting the histogram of the cell area population in figure \ref{fig:areaDist}. In all cases, we see that the $\vQ$ parametrization (in red/orange) generates cells that have almost all the same area, concentrated at the normalized average 1. The $\vP$ parametrization (in blue) on the other hand, generates large disparity between cell areas, evidential of a non-uniform, suboptimal distribution of marker points. In particular, for the rectangle, we see in figure \ref{subfig:S1_areadist}, that the area distribution exhibits a Pareto distribution, where the large amount of the surface area is represented by a minority of the cells; this observation clearly reflects the illustrations in figure \ref{fig:S1Evolve}. The reparametrization $\vQ$ is more uniform, with almost all cells having the average area. These properties are also shown quantitatively in table \ref{tab:Sstats} where we've computed the sample standard deviation and median of the cell areas. In all cases, the standard deviation from the $\vQ$ parametrization is 1 to 2 orders of magnitude smaller than the one from $\vP$, and the median error about 3 to 4 orders of magnitude smaller. In all cases, these improvements came at a cost of a roughly $20\%$ increase over the computational time of the advection (given in section \ref{subsec:equidistCS}); this reparametrization time includes the intermittent evaluation of the full $\vQ$ parametrization (as defined in \eqref{eq:movedParamNum}) which is also used to render the surface. The computation of the redistribution map itself accounts for about $10\%$ of the total computation time. The resulting $\vQ$ however, is a functionally defined parametrization which offers arbitrary resolution at uniform area density.

\begin{figure}[h]
\centering
\begin{subfigure}{0.32\linewidth}
\includegraphics[width = \linewidth]{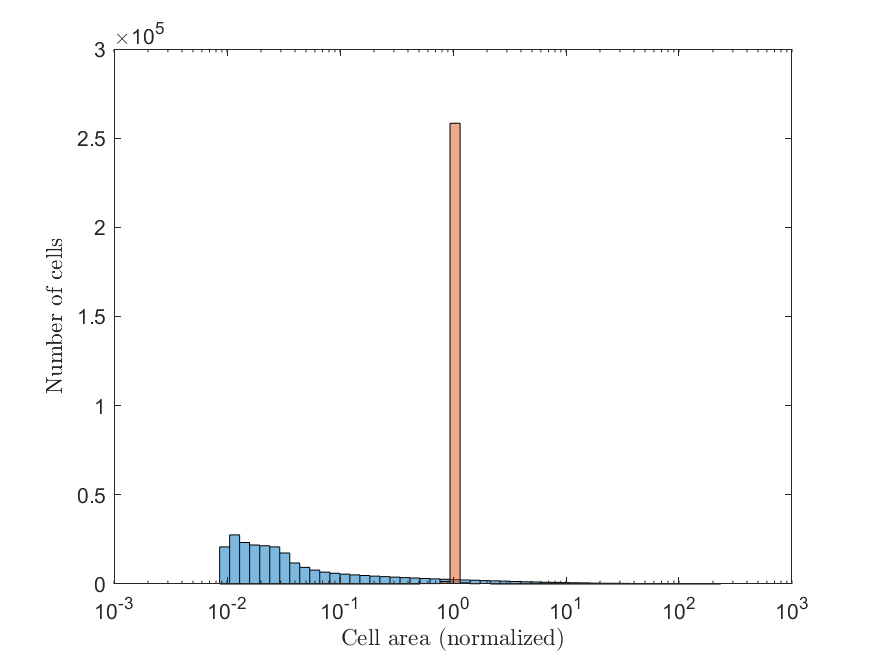}
\caption{Rectangle}
\label{subfig:S1_areadist}
\end{subfigure}
\begin{subfigure}{0.32\linewidth}
\includegraphics[width = \linewidth]{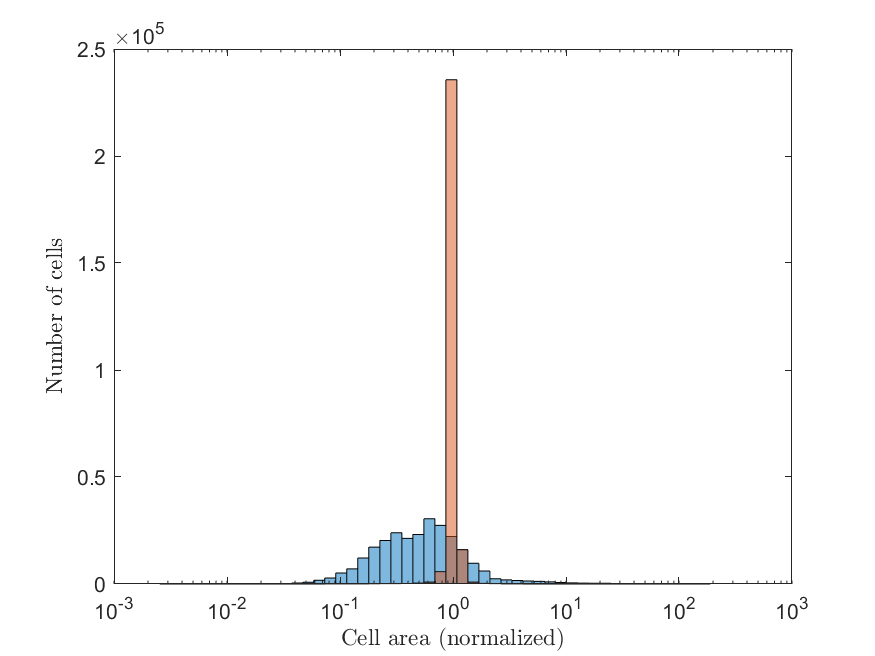}
\caption{Torus}
\label{subfig:S2_areadist}
\end{subfigure}
\begin{subfigure}{0.32\linewidth}
\includegraphics[width = \linewidth]{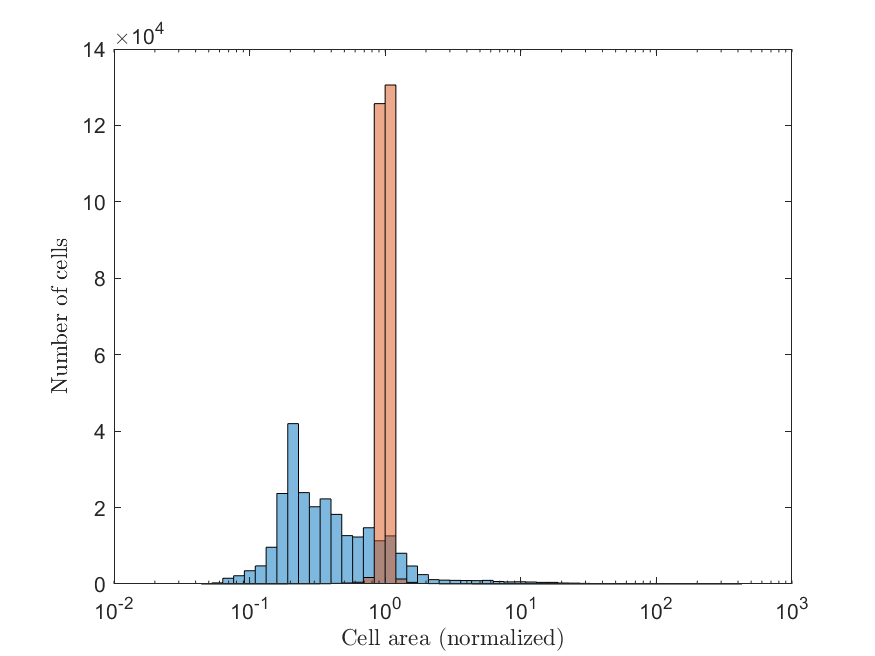}
\caption{Cylinder}
\label{subfig:S3_areadist}
\end{subfigure}
\caption{Area distribution of the parametrizations $\vP$ (blue) and $\vQ$ (red)}
\label{fig:areaDist}
\end{figure}

\begin{table}[H] \footnotesize
\begin{subtable}{0.4\linewidth}
\begin{center}
\begin{tabular}{c | c c c}
\hline
\hline
Surfaces & Rect. & Torus & Cyl. \\
\hline
$\sigma_{\vP}$ & 1.6749 & 0.8828 & 1.098 \\
$\sigma_{\vQ}$  & 0.0094 & 0.0485 & 0.0321 \\
\hline
$M_{\vP}-1$  & -0.9722 & -0.4754 & -0.6718 \\
$M_{\vQ}-1$  & -0.0007 & -0.0001 & -0.0019 \\
\hline
\end{tabular}
\end{center}
\captionsetup{width=.95\linewidth}
\caption{Standard deviation ($\sigma$) and median ($M$) errors of the length density.}
\label{subtab:Serrors}
\end{subtable}
\begin{subtable}{0.5\linewidth}
\begin{center}
\begin{tabular}{c | c c c}
\hline
\hline
Surfaces & Rect. & Torus & Cyl.  \\
\hline
Evaluating $\vQ$ & 47.94 s & 64.68 s & 54.57 s \\
Defining $\rho^n(\vx)$ & 9.15 s & 9.36 s & 9.25 s \\
Updating $\vhX_B$ & 68.86 s & 44.29 s & 49.06 s \\
Number of remappings & 5 & 11 & 5 \\
\hline
\end{tabular}
\end{center}
\captionsetup{width=.95\linewidth}
\caption{Total computation times for the evolution of the parametrization $\vQ$.}
\label{subtab:Stimes}
\end{subtable}
\caption{Parametrizations $\vP$ and $\vQ$ at $t=1.5$.}
\label{tab:Sstats}
\end{table}

\begin{figure}[h]
\centering
\begin{subfigure}{0.28\linewidth}
\includegraphics[width = \linewidth]{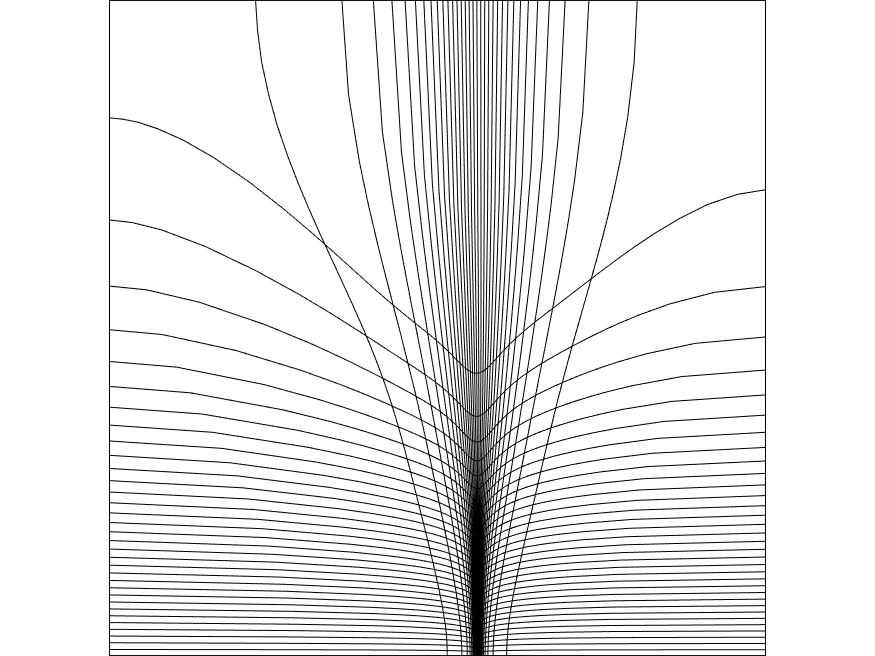}
\caption{Rectangle}
\label{subfig:S1map}
\end{subfigure}
\begin{subfigure}{0.28\linewidth}
\includegraphics[width = \linewidth]{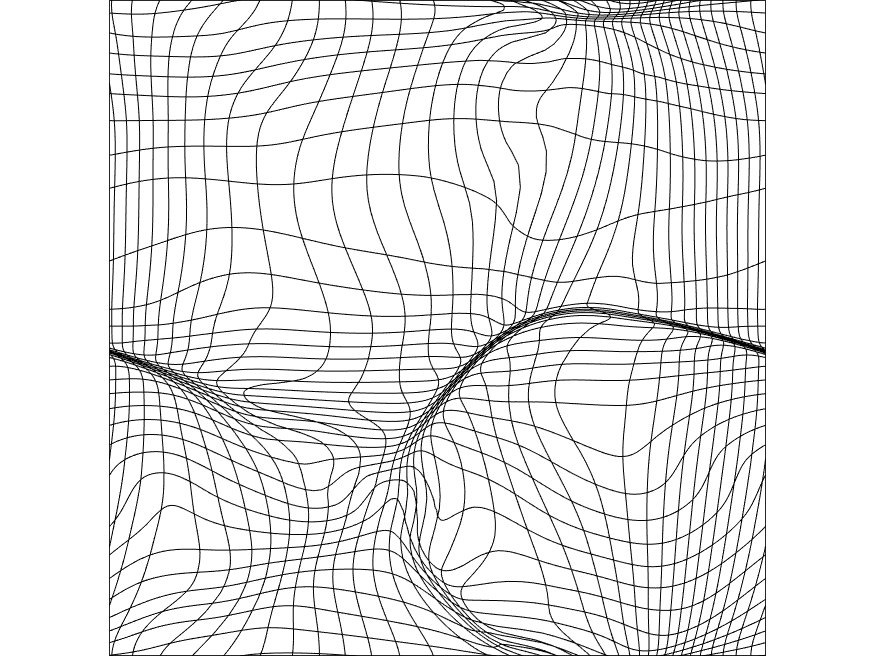}
\caption{Torus}
\label{subfig:S2map}
\end{subfigure}
\begin{subfigure}{0.28\linewidth}
\includegraphics[width = \linewidth]{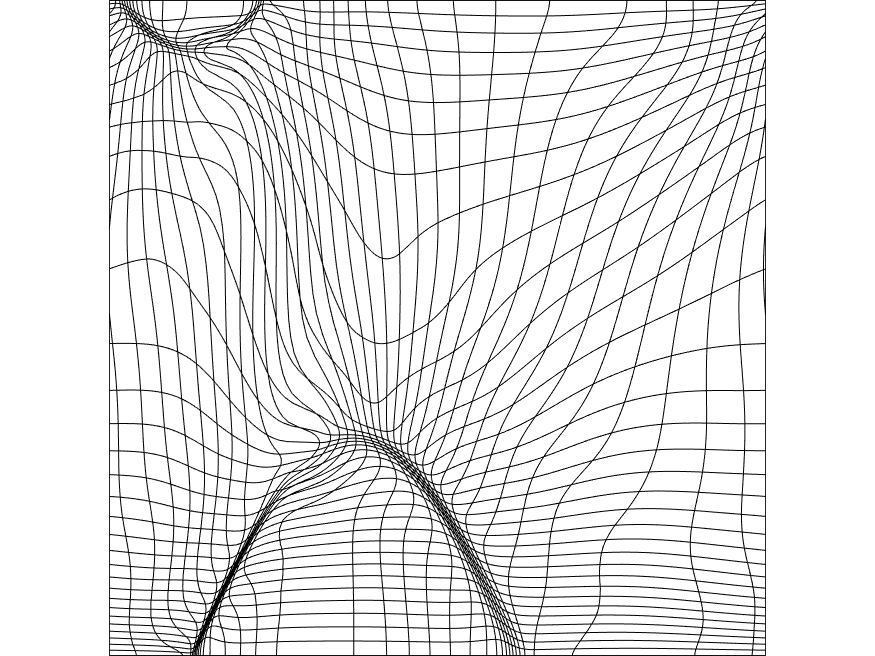}
\caption{Cylinder}
\label{subfig:S3map}
\end{subfigure}
\caption{The redistribution map on the parametric space for each surface at $t=1.5$.}
\label{fig:SMaps}
\end{figure}
Indeed, figure \ref{fig:SMaps} shows the redistribution map on the parametric space for each surface at full deformation. As we can see, since the surfaces undergo violent deformations, the transport maps needed to redistribute evenly the area element will also contain large deformations and small scale features: typically this would require a fine grid to compute and resolve. Instead, we use the semigroup structure of the characteristic maps to carry out short-time computations on coarse grids, the full time redistribution map is then obtained from the composition of submaps given in \eqref{eq:movedParamNum}. The advantage of using the decomposition method for the redistribution map is two-fold: first, if we assume that at time $\tau_i$, the redistributed parametrization $\vQ_{\tau_i}$ is close to equiareal, then the variations in $\rho(\vx, \tau_i)$ are small and therefore can be resolved on a coarse grid for the computation of $\vhX_{[\tau_{i+1}, \tau_i]}$. The resolution of this grid essentially acts as a frequency cut-off selecting the highest frequency in the area element visible to the algorithm. Second, since the evolution of the area density is unknown \emph{a priori} and can grow in an arbitrary fashion, the semigroup property of the map allows us to decompose the transformation into manageable short-time maps and achieve arbitrary resolution in the global-time map obtained from the composition. This permits the redistribution map to represent large deformations and resolve small scale features in order to compensate for the arbitrary changes in area element.


The numerical experiments in this section demonstrate several practical advantages of representing a moving surface using an equiareal parametrization. Compared to a particle-based method, the parametrization of a moving surface defined by the push-forward of the initial parametrization by the flow map provides a functional definition of the surface at all times. Therefore, resampling can be done by simply evaluating the parametrization function at new sample points. In particle methods, new sample points need to be generated by interpolation which can affect the position of the surface whose accuracy will depend on existing sample points and the curvature of the surface. With the CM method, the accuracy of the parametrization function in respect to the surface shape and location depends only on the forward characteristic maps which we control separately. The precision of the marker locations is therefore independent of existing sample points and of the shape of the surface. Furthermore, coupled with the redistribution method, the equiareal property of the parametrization function is maintained. This means that the sampling density of the parameter space is directly mapped to that of the surface without needing extra computation. Adaptive sampling methods based on the area element of the surface may become inaccurate or inefficient when the variations in area become large. In comparison, the method proposed in this paper maintains an equiareal parametrization at all times, therefore there are no extra computations necessary when computing an uniform sampling of the surface and area features above a certain spatial scale as indicated in theorem \ref{thm:areaErrorBound} are guaranteed to be resolved.

\begin{remark}
The redistribution generates an equiareal parametrization, from a sampling point of view, the number of sample points per unit area on the surface should be asymptotically constant. This does not directly translate to a property on the distance between sample points. Indeed, the ratio between the geodesic radius of a disk and its area on the surface is given by the scalar curvature. As a consequence, an equiareal sampling of a surface will have sample points that are more distant from each other where the surface has positive scalar curvature. One way to remedy this would be to require that the equilibrium density of the diffusion be given by the curvature of the surface:
\begin{gather}
\partial_t \rho = \Laplace (\rho - \rho_\kappa) \quad \text{with} \quad \vu = - \frac{\grad (\rho - \rho_\kappa)}{\rho},
\end{gather}
where $\rho_\kappa$ is the target density given by the curvature. The redistribution will then generate an $L^2$-gradient descent on the difference between $\rho$ and $\rho_\kappa$. However, since we no longer have the maximum principle due to the source term, the solution of the heat equation is not guaranteed to stay in the space of probability distributions for all times as densities can temporarily become negative, potentially making the redistribution map singular. So special consideration needs to be taken when the initial and equilibrium densities are far since we no longer have the maximum principle due to the source term. However, this should generally not be an issue if the curvature changes gradually as the source term will be too small to generate a singularity.
\end{remark}

\section{Conclusion}
In this work, we have presented a novel method for computing an equiareal parametrization of a curve or surface. We have applied this method to the problem of surface advection and presented an algorithm for evolving the parametrization of a time-dependent moving surface while maintaining an uniform area distribution. The area element of a surface is first pulled back to the parametric space in order to define a density distribution. We define a diffusion process using this density as initial condition and extract a velocity field from the continuity equation, flowing the initial density towards the uniform density. The backward characteristic map generated from this velocity is used to equidistribute the area element of the surface in the parametric space. In the context of optimal transport, this can be seen as a $W^2$-gradient descent of the entropy landscape. We studied the mathematical construction of the map in section \ref{subsec:densityRedist} and demonstrated the convergence in distribution of the random-variables generated from it. The evolution of the surface under a velocity field is obtained by computing the forward characteristic map in the ambient space as studied in section \ref{sec:surfAdv}. Combined with the redistribution method, we have that the composition of the backward redistribution map on the parametric space with the initial parametrization followed by the push-forward by the ambient space forward map generates an equiareal parametrization for all times. We then tested this method and provided numerical examples of evolving surfaces in 3D ambient space in section \ref{sec:numResults}. This method is novel and unique in that the changes in the parametrization function are made by pre-composition with a deformation of the parametric domain. As a consequence, the image space of the parametrization function is unaffected and we preserve the correct position and shape of the surface. Furthermore, the characteristic mapping method allows us to exploit the semigroup structures of both the surface advection in the ambient space and the density transport on the parametric space. This allows for the decomposition of both maps into coarse grid submaps of smaller deformations, while maintaining high resolution for the parametrization obtained from the composition. The resulting method is able to track large deformations of the curves and surfaces in the ambient space and redistribute the resulting large variations in area density on the parametric space. 

The use of the CM method for evolving equiareal parametrizations of surfaces opens a novel framework with many possibilities for future research. Although it is generally impossible to generate a parametrization which is both equiareal and conformal, there is possibility to maintain a parametrization with a trade-off between these two properties using the CM framework by devising an appropriate redistribution velocity. There may also be interesting methods that couple the equiareal parametrization evolution with a meshing algorithm to generate high quality triangulation meshes on moving surfaces, application of such methods to interface problems such as the Cahn-Hilliard equations \cite{gera2017cahn} could also be of interest. Additionally, since the redistribution framework generalizes directly to any number of dimensions, it may be interesting to investigate its application to volume redistributing flows in 3-dimensional space. Lastly, from a geometric point of view, it would be interesting to study the relation between the diffusion-driven redistribution flow and the geometric flows in the theory of manifold uniformization. We think that these directions of research could be interesting as we believe that the CM method provides a novel and unique approach for solving the problem of surface parametrization and sampling.

\section*{Acknowledgements}
The work by X.-Y. Y. was partially supported by FRQNT B2X (Fonds de recherche du Qu\'{e}bec – Nature et technologies) and by Hydro-Qu\'{e}bec. The work by L. C. was partially supported by the NSERC Discovery program (Natural Sciences and Engineering Research Council). The work by J.-C. N. was partially supported by the NSERC Discovery program.

\clearpage
\appendix
\section{Time Evolution of 1D Curves} \label{sec:appendixCurves}
The curves used in section \ref{subsec:evoCurves} are shown below. The initial curves are shown in figure \ref{fig:InitCurves}, the curves at times $t = 0.6$, $0.9$ and $1.5$ are shown in figure \ref{fig:Curves}. We sample each parametrization function with 256 random marker points sampled from uniform distribution on $U$ which we draw as blue dots over the underlying exact curve in black. The sampling using the $\vP$ parametrization are shown in the top images, the reditributed $\vQ$ parametrization, in the bottom images.
\begin{figure}[h]
\centering
\begin{subfigure}{0.15\linewidth}
\includegraphics[width = \linewidth]{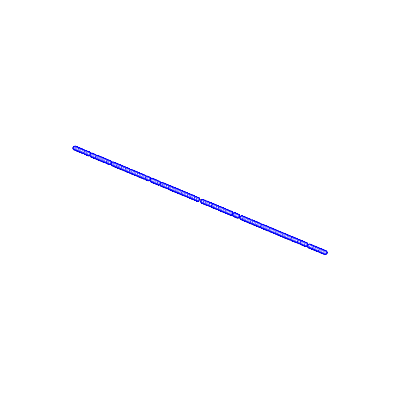}
\caption{Curve 1}
\label{subfig:C1init}
\end{subfigure}
\begin{subfigure}{0.15\linewidth}
\includegraphics[width = \linewidth]{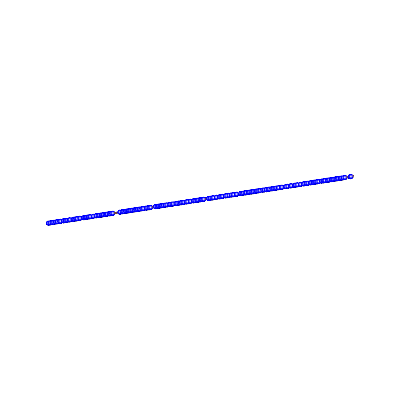}
\caption{Curve 2}
\label{subfig:C2init}
\end{subfigure}
\begin{subfigure}{0.15\linewidth}
\includegraphics[width = \linewidth]{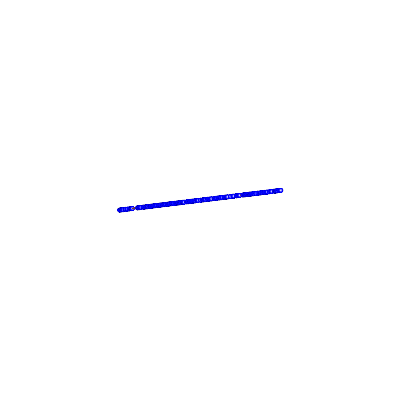}
\caption{Curve 3}
\label{subfig:C3init}
\end{subfigure}
\begin{subfigure}{0.15\linewidth}
\includegraphics[width = \linewidth]{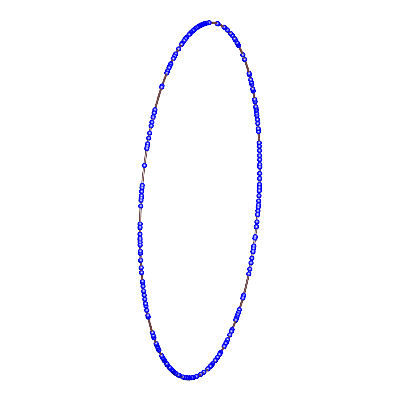}
\caption{Curve 4}
\label{subfig:C4init}
\end{subfigure}
\caption{Initial curves with random sampling.}
\label{fig:InitCurves}
\end{figure}
\begin{figure}[h]
\centering
\begin{subfigure}{0.45\linewidth}
\includegraphics[width = \linewidth]{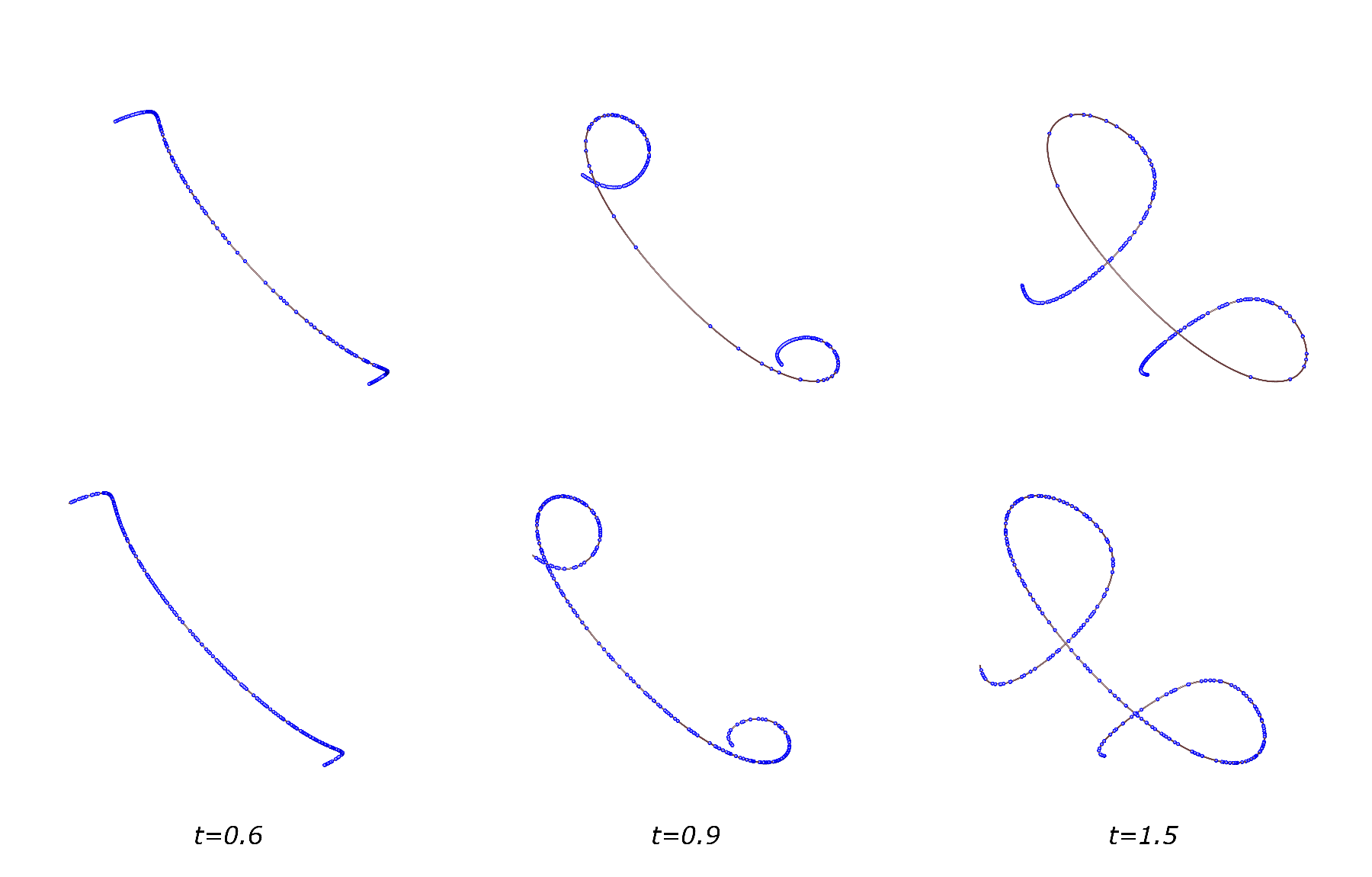}
\caption{Curve 1}
\label{subfig:C1}
\end{subfigure}
\begin{subfigure}{0.45\linewidth}
\includegraphics[width = \linewidth]{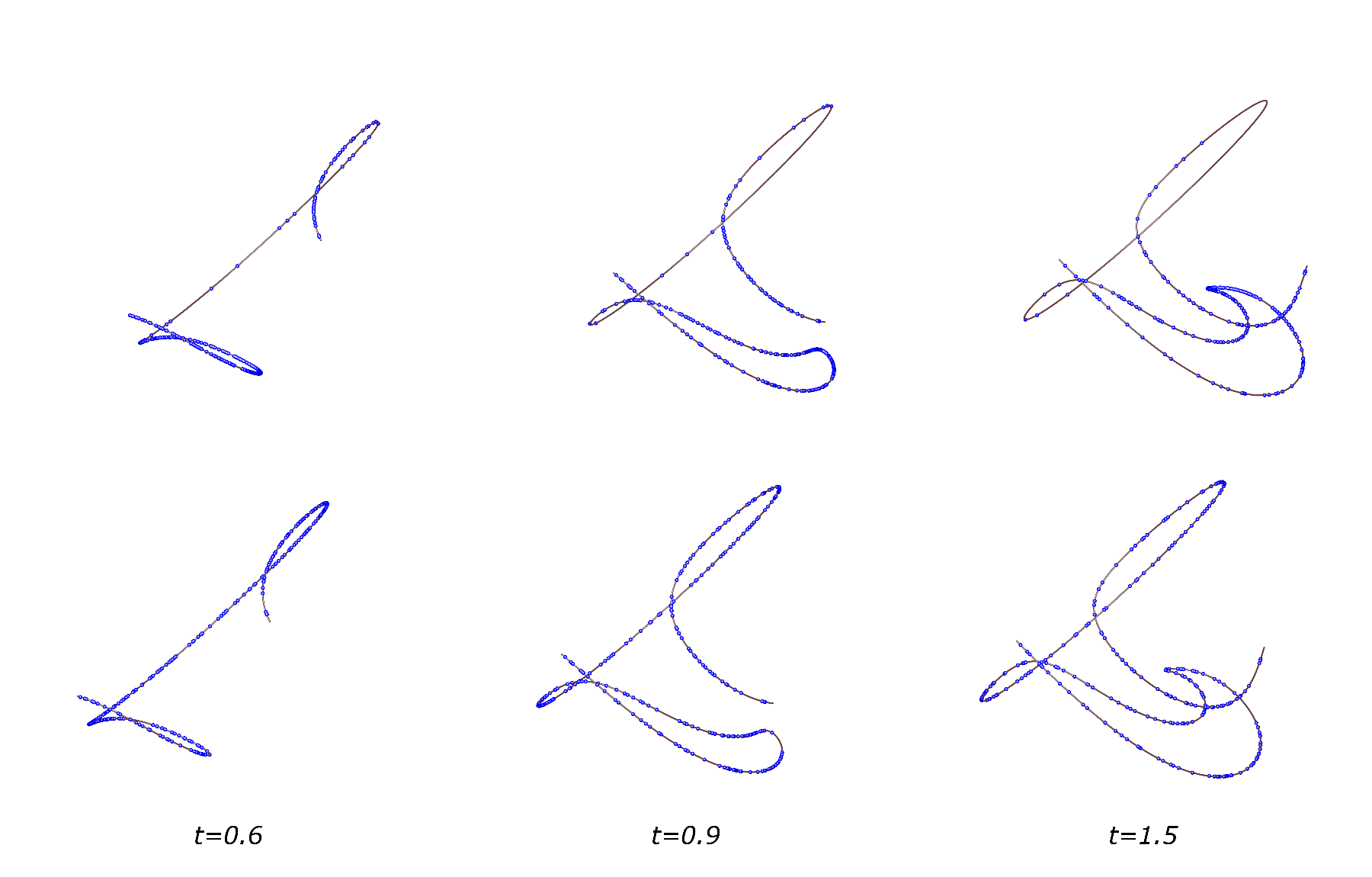}
\caption{Curve 2}
\label{subfig:C2}
\end{subfigure}
\begin{subfigure}{0.45\linewidth}
\includegraphics[width = \linewidth]{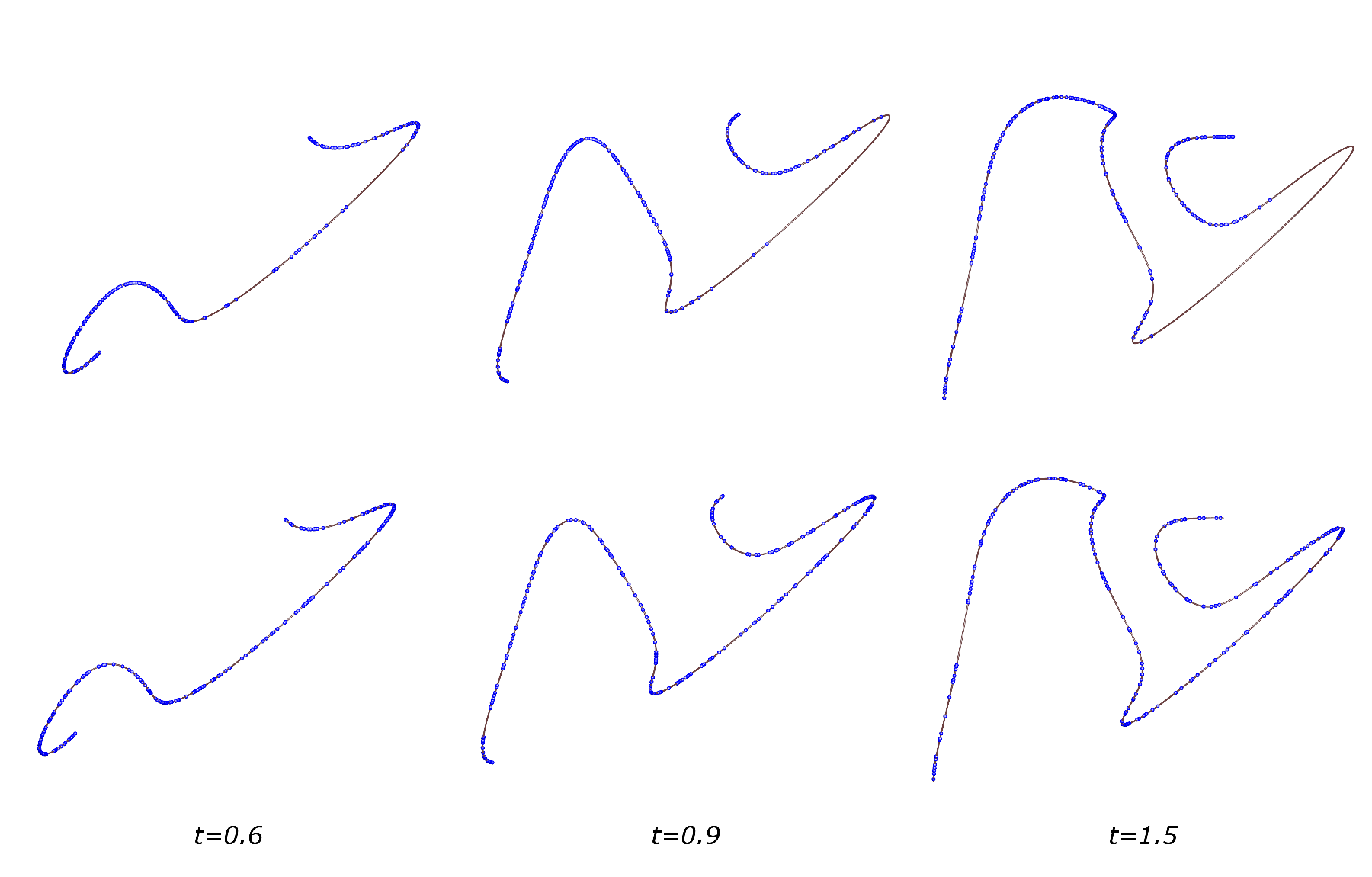}
\caption{Curve 3}
\label{subfig:C3}
\end{subfigure}
\begin{subfigure}{0.45\linewidth}
\includegraphics[width = \linewidth]{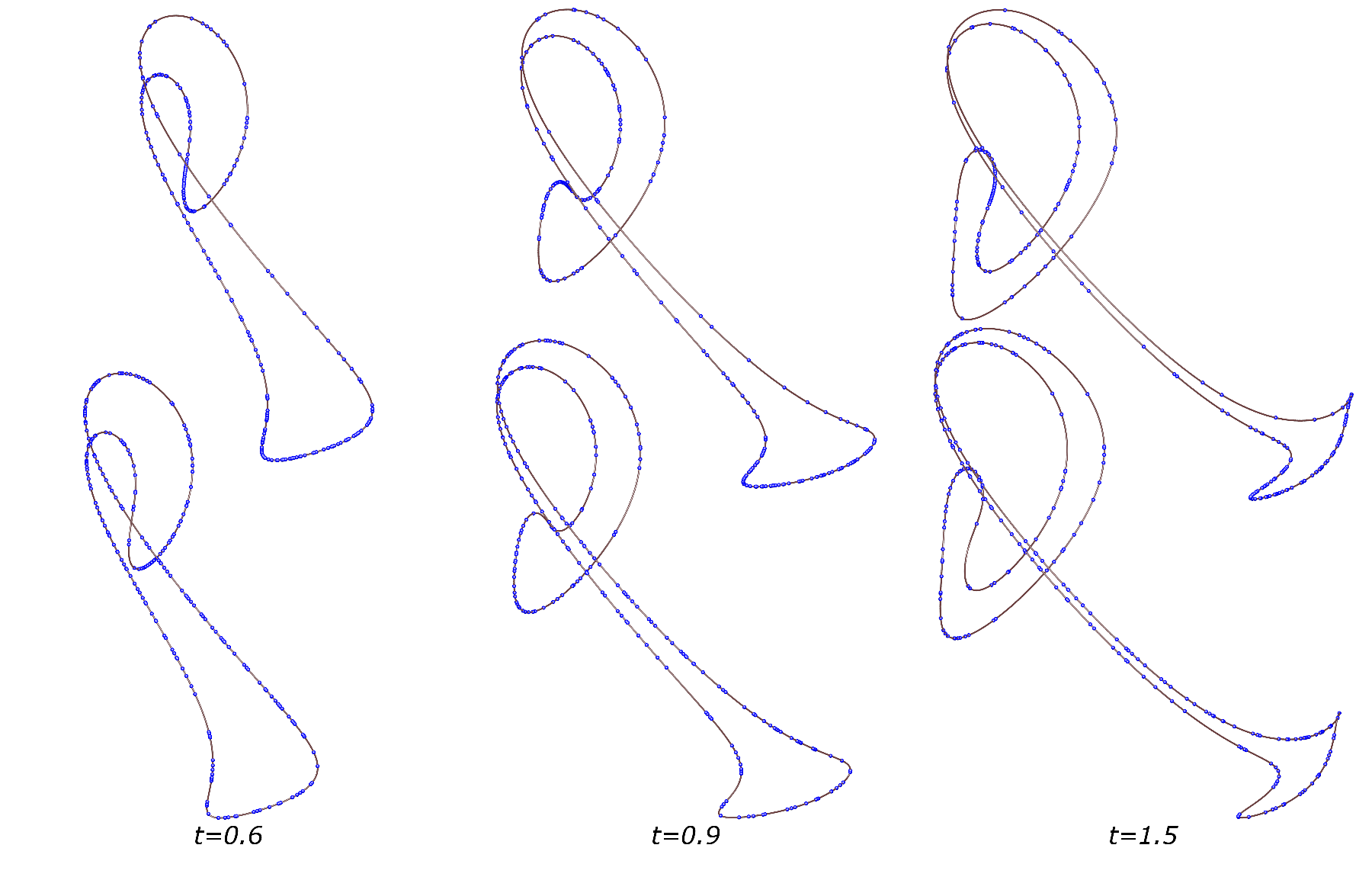}
\caption{Curve 4}
\label{subfig:C4}
\end{subfigure}
\caption{Comparison of the curves sampling using the original parametrization $\vP$ (top images) and the redistributed parametrization $\vQ$ (bottom images).}
\label{fig:Curves}
\end{figure}

\clearpage
\vspace*{12pt} 
\bibliography{densityTransport}
\bibliographystyle{siamplain}

\end{document}